\newtheorem{theorem}{Theorem}
\newtheorem{proposition}[theorem]{Proposition}
\newtheorem{lemma}[theorem]{Lemma}
\newtheorem{remark}[theorem]{Remark}
\newtheorem*{theorem*}{Theorem}
\theoremstyle{definition}
\def\XXint#1#2#3{{\setbox0=\hbox{$#1{#2#3}{\int}$ }
\vcenter{\hbox{$#2#3$ }}\kern-.6\wd0}}
\definecolor{Yellow}{rgb}{0.95,0.9,0.0} 
\definecolor{Red}{rgb}{0.8,0.1,0.1}
\definecolor{Green}{rgb}{0.1,0.65,0.2}
\definecolor{Blue}{rgb}{0.1,0.1,0.8}
\definecolor{Purple}{rgb}{0.7,0.1,0.7}
\definecolor{Grey}{rgb}{0.6,0.6,0.6}
\newcommand{\SBV}{\operatorname{SBV}}
\newcommand{\dist}{\operatorname{dist}}
\newcommand{\Id}{\operatorname{Id}}
\renewcommand{\vec}[1]{{\operatorname{#1}}}
\begin{document}

\title[A monotonicity formula for the Mumford-Shah functional]{A monotonicity formula for minimizers of the Mumford-Shah functional in 2d and a sharp lower bound on the energy density}

\author{Julian Fischer}
\address{Institute of Science and Technology Austria (IST Austria), Am~Campus~1, 
3400 Klosterneuburg, Austria}
\email{julian.fischer@ist.ac.at}

\begin{abstract}
We establish a new monotonicity formula for minimizers of the Mumford-Shah functional in planar domains. Our formula follows the spirit of Bucur--Luckhaus, but works with the David-L\'eger entropy instead of the energy. Interestingly, this allows for a sharp truncation threshold. In particular, our monotonicity formula is able to discriminate between points at a $C^1$ interface and any other type of singularity in terms of a finite gap in the entropy. As a corollary, we prove an optimal lower bound on the energy density around any nonsmooth point for minimizers of the Mumford-Shah functional.
\end{abstract}

\maketitle

\section{Introduction}

In this paper, we establish a new monotonicity formula for minimizers of the Mumford-Shah functional in two dimensions
\begin{align}
\label{MS}
E_{MS}[u] := \int_{\Omega \setminus J_u} |\nabla u|^2 \,dx + \mathcal{H}^1(J_u)
\end{align}
(where $\Omega\subset \mathbb{R}^2$, $u\in \SBV(\Omega)$, and where $J_u$ denotes the jump set of $u$).
The Mumford-Shah functional was originally proposed by Mumford and Shah \cite{MumfordShah} as a model for image segmentation. It serves also as an important prototypical functional for variational models in fracture mechanics, with the Dirichlet energy $\smash{\int_{\Omega \setminus J_u}} |\nabla u|^2 \,dx$ being a prototype for the stored elastic energy and with the interface energy $\mathcal{H}^1(J_u)$ accounting for the energy of the fractures; for a more detailed discussion of variational models for fracture, we refer to \cite{FrancfortMarigo,DalMasoFrancfortToader,ChambolleContiIurlano}.

The variational problem for the Mumford-Shah functional is one of the most prominent examples of a free discontinuity problem, an important class of problems in the calculus of variations in which the discontinuity set of a minimizer is itself subject to optimization.
Existence of minimizers for the Mumford-Shah functional (subject to suitable boundary conditions and possibly with an additional lower-order term added to \eqref{MS}) has been established by Ambrosio \cite{Ambrosio}, De~Giorgi, Carriero, and Leaci \cite{DeGiorgiCarrieroLeaci}, and Dal~Maso, Morel, and Solimini \cite{DalMasoMorelSolimini}.
For an introduction to free discontinuity problems, we refer to \cite{AmbrosioFuscoPallara}.

Our first main result states that any minimizer $u\in \SBV(\Omega)$ of the Mumford-Shah functional \eqref{MS} is subject to a monotonicity formula of the form
\begin{align}
\label{MonotonicityFormula}
\frac{d}{dr} \min\Big\{F(r,x_0),\frac{3}{2}\Big\}
\geq \frac{1}{r} D(r,x_0) \geq 0
\end{align}
where $F(r,x_0)$ denotes the \emph{David-L\'eger entropy}\footnote{In order to distinguish the functional \eqref{Entropy} from the rescaled Mumford-Shah energy on the ball $B_r(x_0)$ as defined in \eqref{EnergyDensity}, we shall refer to it as the \emph{David-L\'eger entropy}, having been introduced by David and L\'eger \cite{DavidLeger}.}
\begin{align}
\label{Entropy}
F(r,x_0):=\frac{1}{r}\bigg[\int_{B_r(x_0) \setminus J_u} |\nabla u|^2 \,dx + \frac{1}{2}\mathcal{H}^1\big(J_u\cap B_r(x_0)\big) \bigg]
\end{align}
and where $D(r,x_0)$ denotes a nonnegative functional detailed below.
To be precise, the inequality \eqref{MonotonicityFormula} holds for all $x_0\in \Omega$ and all $r>0$ such that $B_r(x_0)\subset \Omega$, with the derivative $\frac{d}{dr}$ and the inequality understood in the distributional sense.

The example of the \emph{crack-tip} (given in radial coordinates; see Figure~\ref{FigureCrackTip} for an illustration)
\begin{align}
\label{CrackTip}
u(r,\varphi):=\sqrt{\tfrac{2}{\pi}} \, r^{1/2} \cos \tfrac{\varphi}{2},
\qquad 0<\varphi<2\pi,
\end{align}
-- which is a global minimizer of the Mumford-Shah functional as proven by Bonnet and David \cite{BonnetDavid} --
demonstrates that the truncation threshold $\frac{3}{2}$ for the entropy $F(r,x_0)$ is chosen in a sharp way, i.\,e.\ no monotonicity formula analogous to \eqref{MonotonicityFormula} can hold for a quantity of the form $\min\{F(r,x_0),a\}$ when $a$ is chosen as $a>\frac{3}{2}$.
We refer to Remark~\ref{RemarkSharpThreshold} below for a more detailed discussion of this elementary observation.

As a consequence of our estimates and our monotonicity formula, we shall prove that a minimizer of the Mumford-Shah energy \eqref{MS} must satisfy the energy density lower bound
\begin{align}
\label{DensityLowerBoundIntroduction}
E(r,x_0):=\frac{1}{r}\bigg[\int_{B_r(x_0) \setminus J_u} |\nabla u|^2 \,dx + \mathcal{H}^1(J_u\cap B_r(x_0)) \bigg]\geq 2
\end{align}
around any singular point $x_0$ and for all $r>0$ such that $B_r(x_0)\subset \Omega$. In other words, as soon as $E(r,x_0)<2$ holds for some $x_0\in \Omega$ and some $r\in (0,\dist(x_0,\partial\Omega))$ it follows that $u$ is analytic in a neighborhood of $x_0$.
As the explicit examples of the crack-tip \eqref{CrackTip} and the planar interface
\begin{align*}
u(x):=
\begin{cases}
\alpha&\text{if } (x-x_0)\cdot \vec{n}>0,
\\
\beta&\text{else},
\end{cases}
\end{align*}
(with an arbitrary normal vector $\vec{n}$ and any $\alpha,\beta\in \mathbb{R}$, $\alpha\neq \beta$; note that it is a minimizer on a bounded domain $\Omega$ provided that $|\alpha-\beta|\geq C(\Omega)$, see e.\,g.\ \cite{AlbertiDalMasoCalibration}) show, the lower bound \eqref{DensityLowerBoundIntroduction} is sharp.

We are only aware of a single unconditional\footnote{as opposed to the monotonicity of the Dirichlet energy by Bonnet \cite{Bonnet} which holds only under the additional assumption that the discontinuity set is connected} monotonicity formula for minimizers of the Mumford-Shah functional that was established prior to our work, namely the monotonicity formula by Bucur and Luckhaus \cite{BucurLuckhaus}; it states that there exists a universal constant $c=c(d)>0$ such that the truncated rescaled energy $\min\{E(r,x_0),c\}$ is nondecreasing in $r$ for any minimizer.
Here, the rescaled energy is defined as
\begin{align}
\label{EnergyDensity}
E(r,x_0):=\frac{1}{r} \bigg[\int_{B_r(x_0)\setminus J_u} |\nabla u|^2 \,dx + \mathcal{H}^1(J_u\cap B_r)\bigg].
\end{align}
As the truncation threshold $c>0$ is rather small, their result only provides a far-from-optimal lower bound on the energy density \eqref{EnergyDensity} near singular points. The stronger energy density lower bound $E(r,x_0)\geq 1$ has been obtained by De~Lellis and Focardi \cite{DeLellisFocardi}; however, their estimate was still suboptimal by a factor of $2$.

Before turning to the statement of our main theorems, let us briefly comment on the regularity properties of minimizers of the Mumford-Shah functional.
Partial regularity of minimizers has been established by Ambrosio, Fusco, and Pallara \cite{AmbrosioPallaraPartialRegularity,AmbrosioFuscoPallaraPartialRegularity}.
There are several intriguing and widely open conjectures concerning the regularity of minimizers; most famously, in the planar case $d=2$ it has been conjectured by Mumford and Shah \cite{MumfordShah} that minimizers should be smooth outside of a locally finite collection of $C^1$ arcs, a statement known as the Mumford-Shah conjecture. In \cite{DeGiorgiConjectures}, De~Giorgi formulated several further conjectures regarding the singular set of minimizers. A result relating higher integrability of the gradient $\nabla u$ to the dimension of the singular set has been shown by Ambrosi, Fusco, and Hutchinson \cite{AmbrosioFuscoHutchinson}; higher integrability of the gradient $\nabla u$ in turn has been established by De~Lellis and Focardi \cite{DeLellisFocardiHigherIntegrability} for $d=2$ and by De~Philippis and Figalli \cite{DePhilippisFigalli} in $d\geq 3$. Recently, results on higher regularity near endpoints of curves in the discontinuity set $J_u$ have been obtained, see \cite{AnderssonMikayelyanEndpoint,DeLellisFocardiGhinassiEndpoint}.


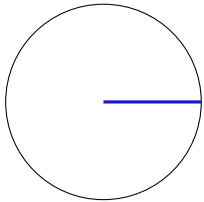
\begin{figure}
\begin{tikzpicture}[scale=0.65]
\draw (0,0) circle (2.0);
\draw[color=Blue,very thick] (0,0) -- (2,0);
\end{tikzpicture}
\caption{
\label{FigureCrackTip}
The discontinuity set (blue) of the crack-tip \eqref{CrackTip} in the ball $B_r(x_0)$, $x_0=0$. Note that the density of the Dirichlet energy of the crack-tip is given by $|\nabla u(\rho,\varphi)|^2=\rho^{-1}$.
}
\end{figure}

{\bf Notation.}
Throughout the paper, we use standard notation for functions of bounded variation. In particular, by $\SBV(\Omega)$ we denote the space of special functions of bounded variation, defined as those functions of bounded variation for which the Cantor part of the distributional derivative vanishes. For $u\in \SBV(\Omega)$, we denote by $J_u$ its jump set (see \cite{AmbrosioFuscoPallara} for the definition) and by $\nabla u$ the absolutely continuous part of its distributional derivative. By the fine properties of $\SBV$ functions, the jump set $J_u$ of a function $u\in \SBV$ coincides with its approximate discontinuity set $S_u$ up to a set of negligible $\mathcal{H}^{d-1}=\mathcal{H}^1$ measure.

By $B_r(x_0)$, we denote the ball of radius $r$ with center $x_0$. For the outward unit normal on $\partial B_r(x_0)$ we will use the notation $\nu(x):=\smash{\tfrac{x-x_0}{|x-x_0|}}$; by $\nu^\perp$ we denote the tangential vector to $\partial B_r(x_0)$, with the convention $e_1^\perp=e_2$ for the two vectors $e_1,e_2$ of the standard basis.
The tangential and normal components of the (absolutely continuous part of the) weak derivative $\nabla u$ on $\partial B_r(x_0)$ will be abbreviated as $\partial_\nu u := \nu(x) \cdot \nabla u$ and $\partial_\tau u := \nu^\perp(x) \cdot \nabla u$; note that $|\partial_\nu u|^2 + |\partial_\tau u|^2 = |\nabla u|^2$. For the tangent vector field $\vec{t}_{J_u}$ of the jump set, we use the sign convention that $\nu\cdot \vec{t}_{J_u}\geq 0$.

Furthermore, we frequently use indicator functions of the form $\smash{\chi_{F(r,x_0)<\tfrac{3}{2}}}$, defined to be $1$ if $F(r,x_0)<\tfrac{3}{2}$ and to be $0$ otherwise.

\section{Main Results}

As our first main result, we prove the following monotonicity formula for minimizers of the Mumford-Shah functional \eqref{MS}.
\begin{theorem}[Monotonicity formula]
\label{TheoremMonotonicityFormula}
Let $\Omega\subset \mathbb{R}^2$ be a planar domain and let $u\in \SBV(\Omega)$ be a minimizer of the Mumford-Shah functional \eqref{MS} subject to any boundary conditions. Fix $x_0\in \Omega$ and define
\begin{align*}
F(r,x_0):=\frac{1}{r}\bigg[\int_{B_r(x_0) \setminus J_u} |\nabla u|^2 \,dx + \frac{1}{2}\mathcal{H}^1(J_u\cap B_r(x_0)) \bigg].
\end{align*}
Then the monotonicity formula
\begin{align*}
\frac{d}{dr} \min\Big\{F(r,x_0),\frac{3}{2}\Big\}
\geq \frac{1}{r}D(r,x_0) \geq 0
\end{align*}
holds in the distributional sense for $0<r<\dist(x_0,\partial\Omega)$, where $D(r,x_0)$ is estimated from below by
\begin{align*}
D(r,x_0)\geq
\begin{cases}
0&\text{if }F(r,x_0)\geq \frac{3}{2},
\\
\frac{1}{C}\int_{\partial B_r(x_0)\setminus J_u} |\nabla u|^2 \,d\mathcal{H}^1 &\text{if }\int_{\partial B_r(x_0)\setminus J_u} |\nabla u|^2 \,d\mathcal{H}^1 \geq 2\text{ and }F(r,x_0)<\frac{3}{2},
\\
\frac{1}{C}\int_{\partial B_r(x_0)\setminus J_u} |\nabla u|^2 \,d\mathcal{H}^1 &\text{if }\#\{J_u\cap \partial B_r(x_0)\}\in \{0,2\}\text{ and }F(r,x_0)<\frac{3}{2},
\\
\frac{3}{2}-F(r,x_0) &\text{if }\#\{J_u\cap \partial B_r(x_0)\}\notin \{0,2\}\text{ and }F(r,x_0)<\frac{3}{2}.
\end{cases}
\end{align*}
Here, for a set $A$ we denote by $\#A$ the number of its elements.
\end{theorem}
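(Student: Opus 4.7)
Following the spirit of Bucur--Luckhaus, my plan is to combine a slicing identity for $rF(r,x_0)$ with a minimality-based comparison against carefully constructed competitors on $B_r(x_0)$, and then upgrade the resulting pointwise inequality to the distributional statement for the truncated quantity. Standard slicing of the Dirichlet integral together with the $\mathcal{H}^1$-coarea formula applied to $J_u$ with the radial distance $|x-x_0|$ yield, for a.e.\ $r\in(0,\dist(x_0,\partial\Omega))$,
\begin{align*}
r\,\frac{dF}{dr}(r,x_0)=-F(r,x_0)+I(r)+\tfrac12\Sigma(r),
\end{align*}
with $I(r):=\int_{\partial B_r\setminus J_u}|\nabla u|^2\dH[1]$ and $\Sigma(r):=\sum_{x\in J_u\cap\partial B_r}|\nu(x)\cdot\vec{t}_{J_u}(x)|^{-1}\ge \#(J_u\cap\partial B_r)=:N(r)$. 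On $\{F<\tfrac32\}$ the theorem therefore reduces to showing $I(r)+\tfrac12\Sigma(r)-F(r,x_0)\ge D(r,x_0)$.

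For each good $r$, testing minimality against $v\in\SBV(\Omega)$ with $v=u$ outside $B_r$ and matching trace on $\partial B_r$ gives $rF\le\int_{B_r\setminus J_v}|\nabla v|^2\dx+\mathcal{H}^1(J_v\cap B_r)$; three competitor constructions then cover the three nontrivial cases. If $N\in\{0,2\}$, I take $v$ to be the harmonic extension on $B_r$ cut along the chord joining the (at most two) jump points; a Fourier computation gives $\int|\nabla v|^2\le r\int|\partial_\tau u|^2\dH[1]$ with a strict quantitative gap from $rI$, producing $F\le cI$ for some $c<1$ and hence $D\ge \tfrac1C I$. If $N\notin\{0,2\}$ and $I\ge 2$, a similar chord-pairing extension works, with the hypothesis $I\ge 2$ providing the boundary Dirichlet budget to absorb the extra perimeter. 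Finally, if $N\notin\{0,2\}$ and $I<2$, topology forces a radial slit from $x_0$ to one unpaired boundary jump point (pairing the rest by chords); harmonic extension on the slit disk, expanded in the half-integer modes $e^{ik\varphi/2}$, saturates the crack-tip balance on its lowest mode, yielding $I(r)+\tfrac12\Sigma(r)\ge\tfrac32$ and hence $D\ge\tfrac32-F$.

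On $\{F<\tfrac32\}$ the derivative $\tfrac{d}{dr}\min\{F,\tfrac32\}$ coincides with $\tfrac{dF}{dr}$ (here $F$ is absolutely continuous in $r$ by the slicing identity), so the pointwise bound above yields the monotonicity formula; on $\{F\ge\tfrac32\}$ the truncated quantity is locally constant, so the inequality $\tfrac{d}{dr}\min\{F,\tfrac32\}\ge\tfrac{1}{r}D$ holds trivially with $D=0$. The most delicate step is the third competitor case: the Fourier bookkeeping on the slit disk must be tight enough that the slit length $r$ (contributing exactly $\tfrac12$ to $F$) matches the Dirichlet energy of the lowest half-integer mode $e^{i\varphi/2}$---the crack-tip profile---so that $I+\tfrac12\Sigma\ge\tfrac32$ becomes an equality on the crack-tip and any larger truncation threshold is immediately ruled out.
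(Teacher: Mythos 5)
Your scaffolding is right---the slicing identity $r\,F'(r)=-F(r)+I(r)+\tfrac12\Sigma(r)$ with $I(r):=\int_{\partial B_r\setminus J_u}|\nabla u|^2\,d\mathcal{H}^1$ and $\Sigma(r):=\sum_{x\in J_u\cap\partial B_r}|\nu(x)\cdot\vec{t}_{J_u}(x)|^{-1}$, the chain rule for the truncation, the case distinction on $N=\#(J_u\cap\partial B_r)$, and the harmonic-extension competitors for $N=0$ and part of $N=2$---but the central case breaks down. Your plan to settle $N=1$ (and odd $N\geq 3$) by a slit-disk competitor cannot work, because competitor constructions only provide \emph{upper} bounds on the interior energy $E(r,x_0)$, never \emph{lower} bounds on the boundary Dirichlet energy $I(r)$. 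Concretely, the harmonic extension $v$ on the slit disk in the half-integer modes has $\tfrac1r\int_{B_r}|\nabla v|^2\,dx\leq 2\int_{\partial B_r}|\partial_\tau u|^2\,d\mathcal{H}^1$ (the factor $2$ comes from the lowest mode $k=1$), and the slit contributes perimeter $r$, so minimality gives only $E(r,x_0)\leq 2\int|\partial_\tau u|^2\,d\mathcal{H}^1+1$. Inserting this into the representation $D=\chi_{F<3/2}\bigl(\tfrac32\int|\partial_\tau u|^2+\tfrac12\int|\partial_\nu u|^2+\tfrac12\bigl(\tfrac{1}{|\cos\alpha|}+|\cos\alpha|\bigr)-E\bigr)$ and using $\tfrac12(\tfrac{1}{|\cos\alpha|}+|\cos\alpha|)\geq 1$ yields $D\geq\chi_{F<3/2}\bigl(\tfrac12\int|\partial_\nu u|^2-\tfrac12\int|\partial_\tau u|^2\bigr)$, which can be negative. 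So ``saturating the crack-tip balance'' via a competitor does not deliver the claimed inequality $I(r)+\tfrac12\Sigma(r)\geq\tfrac32$.

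What is actually needed is a genuine lower bound $I(r)\geq 1$ when $N=1$, and that requires a \emph{different kind} of variation than a comparison argument: the paper tests the first-variation (equilibrium) identity with a vector field that is constant inside $B_r$ and vanishes outside, yielding for a.e.\ $r$ and every unit vector $\vec q$
\begin{align*}
\int_{\partial B_r(x_0)\setminus J_u}|\nabla u|^2\,d\mathcal{H}^1\geq\sum_{x\in J_u\cap\partial B_r(x_0)}\vec q\cdot\vec{t}_{J_u}(x),
\end{align*}
after an elementary completion of the square on the circle (Proposition~\ref{PropositionLowerBoundRadialDirichletEnergyCrack}). Choosing $\vec q=\vec{t}_{J_u}(x')$ gives $I(r)\geq 1$ for $N=1$, which is exactly the missing ingredient. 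This same estimate, optimized over $\vec q$, is also what rescues the $N=2$ case when the shorter arc is close to $\tfrac{\pi}{2}$: your chord/radial cut competitor only closes the argument when the longer arc has angle strictly below $\tfrac{3}{2}\pi$ (since the sector extension costs a factor $\theta/\pi$), and for nearly antipodal points on the far side one needs the angle-dependent lower bound from Proposition~\ref{PropositionLowerBoundRadialDirichletEnergyCrack} combined with the numerical minimization in Lemma~\ref{LemmaLowerBoundTwoPoints}. Without a stationarity-based lower bound on the boundary Dirichlet energy, the proposal cannot reach the sharp threshold $\tfrac32$.
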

In particular, our monotonicity formula is able to distinguish between smooth points, points at which $u$ has a discontinuity across a $C^1$ interface, and any other type of singularity in terms of a finite gap in the entropy: It ensures that at each point $x_0\in \Omega$ one of the three alternatives
\begin{itemize}
\item $\lim_{r\rightarrow 0} F(r,x_0)=0$, implying that $u$ is analytic in a neighborhood of $x_0$,
\item $F(r,x_0)\geq 1$ for all $r\in (0,\dist(x_0,\partial\Omega))$ and $\lim_{r\rightarrow 0} F(r,x_0)=1$, implying that near $x_0$ the discontinuity set $J_u$ is given by a $C^{1,1}$ interface and that $\nabla u$ is bounded near $x_0$,
\item $F(r,x_0)\geq \frac{3}{2}$ for all $r\in (0,\dist(x_0,\partial\Omega))$, corresponding to any other type of singularity,
\end{itemize}
must hold\footnote{To see this, note that our monotonicity formula entails that either $F(r,x_0)\geq \frac{3}{2}$ holds for all $r>0$ or that $\smash{\lim_{r\rightarrow 0}\frac{1}{r}\int_{B_r(x_0)}|\nabla u|^2 \,dx} = 0$. In the latter case, it follows from \cite[Proposition~5.8]{AmbrosioFuscoHutchinson} that one of out three options holds near $x_0$: $u$ is analytic near $x_0$, $J_u$ is locally a $C^{1,1}$ curve passing through $x_0$ with a uniform bound on $\nabla u$ in a neighborhood, or the blowup limits of $J_u$ around $x_0$ converge to a ``propeller'', which consists of three lines meeting at $x_0$. The second alternative clearly implies $\lim_{r\rightarrow 0} F(r,x_0)=1$, while the last alternative entails $\liminf_{r\rightarrow 0} F(r,x_0)\geq \frac{3}{2}$.}. Note that a criterion based on the energy density $E(r,x_0)$ as given by \eqref{EnergyDensity} could not distinguish between a crack-tip at $x_0$ and $x_0$ being part of a $C^{1,1}$ interface, as in both cases it holds that $\lim_{r\rightarrow 0} E(r,x_0)=2$.

As an inspection of our proofs reveals, our monotonicity formula also holds for reduced global minimizers of the Mumford-Shah functional on $\Omega=\mathbb{R}^2$ (which arise as blowup limits of minimizers around singular points; e.\,g.\ \cite{DavidLeger} for an explanation of the concept and its role in the regularity theory of minimizers). The same in fact applies to our density lower bound stated in the next theorem.

As a corollary to our monotonicity formula and to the key new ingredient of its proof, we obtain a sharp lower bound on the energy density $E(r,x_0)$ around singularities of minimizers. Note that the previously available lower bound established by De~Lellis and Focardi \cite{DeLellisFocardi} is suboptimal by a factor of $2$, i.\,e.\ it only provides the estimate $E(r,x_0)\geq 1$ around singular points instead of our sharp estimate $E(r,x_0)\geq 2$.
\begin{theorem}[Sharp lower bound for the energy density at singularities]
\label{TheoremDensityLowerBound}
Let $\Omega\subset \mathbb{R}^2$ be a planar domain and let $u$ be a minimizer of the Mumford-Shah energy \eqref{MS} subject to any boundary conditions on $\partial\Omega$. Then for any point $x_0 \in \Omega$, one of the following two alternatives holds:
\begin{itemize}
\item[i)] $u$ is analytic in a neighborhood of $x_0$, i.\,e.\ $u$ has no singularity near $x_0$.
\item[ii)] $u$ has a singularity at $x_0$ and the density lower bound
\begin{align}
\label{DensityLowerBound}
E(r,x_0):=
\frac{1}{r}\bigg[\int_{B_r(x_0) \setminus J_u} |\nabla u|^2 \,dx + \mathcal{H}^1(J_u\cap B_r(x_0)) \bigg] \geq 2
\end{align}
holds for all $0<r<\dist(x_0,\partial \Omega)$.
\end{itemize}
\end{theorem}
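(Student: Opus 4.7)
The approach is to combine the trichotomy enforced by the monotonicity formula (Theorem~\ref{TheoremMonotonicityFormula}) with a direct competitor-based comparison that forms the key new ingredient in its proof. I would argue by contradiction: assume $x_0$ is a singular point of $u$ (so $u$ is not analytic near $x_0$) but $E(r_0, x_0) < 2$ for some admissible $r_0$, and derive a contradiction with either the monotonicity formula or the minimality of $u$.

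The first step is to extract from Theorem~\ref{TheoremMonotonicityFormula} the lower bound $F(r, x_0) \geq 1$ for \emph{all} $r \in (0, \dist(x_0, \partial\Omega))$. By monotonicity, $\min\{F(r, x_0), 3/2\}$ has a limit $L \in [0, 3/2]$ as $r \to 0^+$; the trichotomy recalled in the footnote after Theorem~\ref{TheoremMonotonicityFormula} restricts $L$ to $\{0, 1, 3/2\}$, the value $L=0$ is the analytic case we have excluded, and the monotone bound then propagates $F \geq L \geq 1$ to all $r$. Combined with the elementary identity
\begin{align*}
E(r, x_0) = F(r, x_0) + \frac{1}{2r}\mathcal{H}^1(J_u \cap B_r(x_0)),
\end{align*}
the desired estimate $E \geq 2$ is equivalent to the sharp density bound $\mathcal{H}^1(J_u \cap B_r(x_0)) \geq 2r\big(2 - F(r, x_0)\big)$ on the jump set. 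This reduces to $\mathcal{H}^1 \geq r$ in the regime $F \geq 3/2$ and to $\mathcal{H}^1 \approx 2r$ when $F$ is close to its limit $1$ at a $C^{1,1}$-interface point.

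To secure these density bounds and complete the argument, I would invoke the competitor construction underlying the proof of Theorem~\ref{TheoremMonotonicityFormula}. Slicing over radii, pick a good $r$ at which $J_u \cap \partial B_r$ is a finite set of $n$ points with finite boundary Dirichlet energy $\int_{\partial B_r \setminus J_u} |\nabla u|^2 \,d\mathcal{H}^1$. For $n \in \{0, 2\}$, construct a competitor $v$ on $B_r(x_0)$ by extending the jump points radially to $x_0$ and filling each resulting sector by the harmonic extension of $u|_{\partial B_r}$; the energy of $v$ is controlled by a sharp trace inequality on the (slit) circle that distinguishes half-integer from integer Fourier modes, and minimality of $u$ then yields the required density estimate. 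For $n \notin \{0, 2\}$, the fourth alternative in Theorem~\ref{TheoremMonotonicityFormula} provides the large monotonicity gain $D(r, x_0) \geq \tfrac{3}{2} - F(r, x_0)$, and integrating this in $r$ reduces the analysis to the favorable case or directly enforces $F \equiv \tfrac{3}{2}$ on $(0, r]$. The main obstacle lies precisely in the $n=2$ competitor estimate: the previous bound $E \geq 1$ of De~Lellis and Focardi was obtained by a similar harmonic extension but lost a factor of two, and the crack-tip \eqref{CrackTip} (whose Dirichlet energy is concentrated in the half-integer mode $\cos(\varphi/2)$ on the slit disk) is the unique saturating case. The core technical challenge is to design the competitor and the accompanying trace inequality so that the half-integer mode, rather than the integer modes associated with the planar interface, is the precise equality case.
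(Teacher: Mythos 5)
Your proposal misses the paper's key structural observation, and as a result you steer yourself toward a competitor estimate that is both harder than necessary and, for the sharp constant, a dead end.

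The paper reduces Theorem~\ref{TheoremDensityLowerBound} to the \emph{slice-wise} estimate
\begin{align*}
\int_{\partial B_r(x_0)\setminus J_u} |\nabla u|^2 \,d\mathcal{H}^1 \;+\; \#\{J_u\cap\partial B_r(x_0)\} \;\geq\; 2
\qquad\text{for a.e.\ }r,
\end{align*}
which after integration in $r$ (together with $\mathcal{H}^1(J_u\cap B_R)\geq\int_0^R\#\{J_u\cap\partial B_r\}\,dr$) gives $E(R,x_0)\geq 2$ directly. Once the statement is phrased this way, the case split over $n:=\#\{J_u\cap\partial B_r\}$ looks completely different from yours: for $n\geq 2$ the estimate is \emph{trivial} (no competitor whatsoever is needed); for $n=1$ it is precisely the new Proposition~\ref{PropositionLowerBoundRadialDirichletEnergyCrack}, which gives $\int_{\partial B_r\setminus J_u}|\nabla u|^2\,d\mathcal{H}^1\geq 1$; and for $n=0$ one combines $F(r,x_0)\geq 1$ (from the monotonicity formula, as you correctly argue) with the elementary $n=0$ harmonic extension (Lemma~\ref{LemmaCompetitorZeroIntersection}) and the David--L\'eger--Maddalena--Solimini relation~\eqref{DavidLegerEtAl} to get $1\leq F(r,x_0)\leq\tfrac12\int_{\partial B_r\setminus J_u}|\nabla u|^2\,d\mathcal{H}^1$. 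In particular, Proposition~\ref{PropositionLowerBoundRadialDirichletEnergyCrack} — the domain-variation lower bound that is the paper's genuinely new ingredient — is indispensable in the $n=1$ case, and you never invoke it in your proof sketch.

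By contrast, you reduce the problem to a jump-length density bound $\mathcal{H}^1(J_u\cap B_r)\geq 2r(2-F(r,x_0))$ and then propose to establish it via a sharp $n=2$ competitor on the slit disk. This is exactly where the De~Lellis--Focardi argument loses a factor of two, and there is no sharp trace inequality of the kind you describe: the slit-disk harmonic extension cannot isolate the half-integer mode in the way required, and indeed the paper does \emph{not} prove any sharp $n=2$ competitor bound (the $n=2$ analysis in the monotonicity-formula proof uses a soft, nonsharp competitor plus Proposition~\ref{PropositionLowerBoundRadialDirichletEnergyCrack}, and is not needed for Theorem~\ref{TheoremDensityLowerBound} at all). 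Similarly, your proposed use of the $D\geq\tfrac32-F$ alternative for $n\notin\{0,2\}$ and integration in $r$ would only give information about $F$, not directly about $E$, and does not close the argument. The fix is to abandon the reformulation in terms of $\mathcal{H}^1(J_u\cap B_r)$, prove the slice-wise bound above, and use Proposition~\ref{PropositionLowerBoundRadialDirichletEnergyCrack} for $n=1$.
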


We finally mention the example showing that the truncation threshold $\tfrac{3}{2}$ in our monotonicity formula \eqref{MonotonicityFormula} is sharp.
\begin{remark}
\label{RemarkSharpThreshold}
No monotonicity formula analogous to \eqref{MonotonicityFormula} can hold for a quantity of the form $\min\{F(r,x_0),a\}$ when $a$ is chosen as $a>\frac{3}{2}$.
To see this, consider the entropy $F(r,x_0)$ of the crack-tip $u(\rho,\varphi):=\smash{\sqrt{2/\pi} \, \rho^{1/2}} \cos \tfrac{\varphi}{2}$ (with $\rho>0$, $0<\varphi<2\pi$) for different (large) radii $r$ around the point $x_0:=e_1$ on the horizontal axis.
It is easy to compute that for the crack-tip one has $F(r,e_1)=F(1,r^{-1} e_1)$ for any $r>0$. In other words, one has $\lim_{r\rightarrow \infty} F(r,e_1)=F(1,0)=\tfrac{3}{2}$. To show the failure of monotonicity of $\min\{F(r,x_0),a\}$ whenever $a$ is chosen as $a>\tfrac{3}{2}$, it therefore suffices to show $F(r,e_1)>\tfrac{3}{2}$ for some large enough $r$. Due to $F(r,e_1)=F(1,r^{-1} e_1)$, this is equivalent to showing $F(1,\delta e_1)>\frac{3}{2}$ for some small enough $\delta>0$.
The latter inequality in fact holds for all sufficiently small $\delta>0$: A Taylor expansion yields $\int_{B_1(\delta e_1)} |\nabla u|^2 \,dx\geq \int_{B_1(0)} |\nabla u|^2 \,dx - O(\delta^2)=1-O(\delta^2)$ for small $\delta$, as the crack-tip satisfies $|\nabla u|^2(\rho,\varphi)=\tfrac{1}{2\pi}\rho^{-1}$. On the other hand, a direct computation gives $\mathcal{H}^1(J_u\cap B_1(\delta e_1))=1+\delta$. Overall, one has $F(1,\delta e_1)\geq\tfrac{3}{2}+\tfrac{\delta}{2}-O(\delta^2)$, showing that $F(1,\delta e_1)>\tfrac{3}{2}$ for $\delta>0$ small enough.
\end{remark}

\section{A lower bound on the Dirichlet energy on circles}
\label{SectionNewFormula}

The following proposition provides a lower bound on the Dirichlet energy of minimizers $u$ on circles $\partial B_r(x_0)$ in terms of the tangent vectors of the singular set $J_u$ at its intersections with $\partial B_r(x_0)$. In particular, whenever $J_u$ intersects $\partial B_r(x_0)$ at exactly one point, we obtain the lower bound $\int_{\partial B_r(x_0)} |\nabla u|^2 \,d\mathcal{H}^1\geq 1$. This estimate is at the very heart of the proof of our monotonicity formula and our sharp density lower bound.
\begin{proposition}
\label{PropositionLowerBoundRadialDirichletEnergyCrack}
Let $\Omega\subset \mathbb{R}^2$ and let $u$ be a minimizer of the Mumford-Shah energy \eqref{MS}. Let $x_0 \in \Omega$.
Then for a.\,e.\ $0<r<\dist(x_0,\partial\Omega)$
the estimate
\begin{align*}
\int_{\partial B_r(x_0) \setminus J_u} |\nabla u|^2 \,d\mathcal{H}^1
\geq \sum_{x\in J_u \cap \partial B_r(x_0)} \vec{q} \cdot \vec{t}_{J_u}(x)
\end{align*}
holds for any unit vector $\vec{q}\in \mathbb{S}^1$, where we use the convention that $\vec{t}_{J_u}$ is oriented such that $\nu(x)\cdot \vec{t}_{J_u}(x)>0$ for $x\in J_u \cap \partial B_r(x_0)$.

In particular, for a.\,e.\ $0<r<\dist(x_0,\partial\Omega)$ with $\# \{J_u\cap \partial B_r(x_0)\}=1$, we have
\begin{align*}
\int_{\partial B_r(x_0) \setminus J_u} |\nabla u|^2 \,d\mathcal{H}^1
\geq 1.
\end{align*}
\end{proposition}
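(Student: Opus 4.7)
The plan is to derive the bound from the Noether identity associated with the translation invariance of the Mumford-Shah functional, applied to an approximate translation field concentrated on $\partial B_r(x_0)$. Fix $\vec{q}\in\mathbb{S}^1$ and consider smooth vector fields $X_\eps(x):=\varphi_\eps(|x-x_0|)\,\vec{q}$, where $\varphi_\eps\in C_c^\infty([0,\infty))$ is a smooth cutoff approximating $\chi_{[0,r]}$ (so $\varphi_\eps'\rightharpoonup -\delta_r$ as $\eps\to 0$). For $t$ small, $\Phi_t:=\Id+tX_\eps$ is a diffeomorphism of $\Omega$ with support strictly inside $\Omega$, and stationarity of $u$ under the inner variation $u\mapsto u\circ\Phi_t^{-1}$ yields the standard identity
\begin{equation*}
\int_{\Omega\setminus J_u}\bigl[|\nabla u|^2\,\divv X_\eps-2\,(\nabla u\otimes\nabla u):\nabla X_\eps\bigr]\dx + \int_{J_u}\divv^{J_u}\!X_\eps\dH[1] = 0.
\end{equation*}

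The next step is the passage to the limit $\eps\to 0$. Since $\nabla X_\eps=\varphi_\eps'(|x-x_0|)\,\nu\otimes\vec{q}$ concentrates on $\partial B_r(x_0)$, for a.e.\ $r\in(0,\dist(x_0,\partial\Omega))$ the bulk part converges to $\int_{\partial B_r(x_0)\setminus J_u}\bigl[2\,\partial_\nu u\,(\nabla u\cdot\vec{q})-|\nabla u|^2\,(\vec{q}\cdot\nu)\bigr]\dH[1]$. For the jump-set term, I would compute
\begin{equation*}
\divv^{J_u}\!X_\eps(x) = \varphi_\eps'(|x-x_0|)\,(\vec{t}_{J_u}\cdot\nu)(\vec{t}_{J_u}\cdot\vec{q}),
\end{equation*}
and apply a coarea/slicing argument along $J_u$. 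With the sign convention $\vec{t}_{J_u}\cdot\nu>0$ at intersection points with $\partial B_r(x_0)$, the $\vec{t}_{J_u}\cdot\nu$ factor cancels against the Jacobian of the change of variables $d\rho=(\vec{t}_{J_u}\cdot\nu)\,ds$, reducing the jump-set term in the limit to $-\sum_{x\in J_u\cap\partial B_r(x_0)}\vec{t}_{J_u}(x)\cdot\vec{q}$. Combining these two limits yields the Noether-type identity
\begin{equation*}
\sum_{x\in J_u\cap\partial B_r(x_0)}\vec{t}_{J_u}(x)\cdot\vec{q} = \int_{\partial B_r(x_0)\setminus J_u}\bigl[2\,\partial_\nu u\,(\nabla u\cdot\vec{q})-|\nabla u|^2\,(\vec{q}\cdot\nu)\bigr]\dH[1].
\end{equation*}

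The proof concludes with a pointwise algebraic inequality. Writing $\nabla u=\alpha\nu+\beta\nu^\perp$ and $\vec{q}=p\nu+s\nu^\perp$ with $p^2+s^2=1$, a direct expansion gives
\begin{equation*}
|\nabla u|^2 - \bigl[2\,\partial_\nu u\,(\nabla u\cdot\vec{q})-|\nabla u|^2(\vec{q}\cdot\nu)\bigr] = \alpha^2(1-p)+\beta^2(1+p)-2\alpha\beta s,
\end{equation*}
and the identity $s^2=(1-p)(1+p)$ shows that the right-hand side is the perfect square $(\sqrt{1-p}\,\alpha\mp\sqrt{1+p}\,\beta)^2\ge 0$ (with sign determined by that of $s$). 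Integrating this pointwise bound over $\partial B_r(x_0)\setminus J_u$ and inserting the identity from the previous paragraph yields the claim, and the ``in particular'' statement follows by choosing $\vec{q}:=\vec{t}_{J_u}(x)$ when $J_u\cap\partial B_r(x_0)$ reduces to a single point $\{x\}$.

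The main technical obstacle I anticipate is the rigorous limit $\eps\to 0$ of the jump-set term: it relies on the coarea formula for $1$-rectifiable sets together with a Sard-type transversality of circles $\partial B_r(x_0)$ to $J_u$ for a.e.\ $r$, which is also the reason the conclusion only holds for almost every radius. The bulk passage is a standard stress-energy flux identity and presents no new difficulty.
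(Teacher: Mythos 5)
Your proposal is correct and follows essentially the same route as the paper: the same inner-variation (equilibrium) equation, the same constant-inside cutoff field concentrating on $\partial B_r(x_0)$, the same coarea/Sard argument to pass to the limit, and a pointwise completing-the-square bound that is algebraically equivalent to the Young's-inequality step the paper performs in polar coordinates. The final ``in particular'' reduction by choosing $\vec{q}=\vec{t}_{J_u}(x)$ is also the paper's.
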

Before beginning with the proof, let us provide an illustration of its idea.
In the situation of the second estimate in Proposition~\ref{PropositionLowerBoundRadialDirichletEnergyCrack}, the jump set $J_u$ of the minimizer intersects the disk boundary $\partial B_r(x_0)$ at precisely one point. The assertion is then that the Dirichlet energy of $u$ integrated over the disk boundary $\partial B_r(x_0)$ must be at least $1$.
The idea for the proof is to consider a variation by a family of diffeomorphisms whose generating vector field vanishes outside of the disk $B_r(x_0)$ and is constant inside the disk $B_r(x_0)$, ``pushing'' the jump set $J_u$ out of $B_r(x_0)$ (at its only intersection with $\partial B_r(x_0)$). Such a variation attempts to reduce the interface energy in $\overline{B_r(x_0)}$ by reducing the interface length near $\partial B_r(x_0)$; for $u$ to be a minimizer, it turns out that the Dirichlet energy on the disk boundary $\int_{\partial B_r(x_0) \setminus J_u} |\nabla u|^2 \,d\mathcal{H}^1$ must be at least $1$ to counteract this energy reduction.
\begin{proof}[Proof of Proposition~\ref{PropositionLowerBoundRadialDirichletEnergyCrack}]
Any minimizer $u$ of the Mumford-Shah functional \eqref{MS} satisfies the equilibrium equation
\begin{align}
\label{EquilibriumEquation}
\int_{\Omega\setminus J_u} \big(2 \nabla u \otimes \nabla u - |\nabla u|^2 \Id \big) : \nabla \eta \,dx
-\int_{\Omega \cap J_u} \vec{t}_{J_u} \otimes \vec{t}_{J_u} : \nabla \eta \,d\mathcal{H}^1 = 0
\end{align}
for any compactly supported $C^1$ vector field $\eta$ (see e.\,g.\ \cite{AmbrosioFuscoPallara} for a derivation).
Given a unit vector $\vec{q}\in \mathbb{S}^1$, we set (making use of an approximation argument)
\begin{align*}
\eta:=
\begin{cases}
0&\text{for }|x|\geq r,
\\
\frac{r-|x|}{\delta} \vec{q}&\text{for }r\geq |x|\geq r-\delta,
\\
\vec{q}&\text{for }|x|\leq r-\delta.
\end{cases}
\end{align*}
Computing that
\begin{align*}
\nabla \eta
=
\begin{cases}
0&\text{for }|x|\geq r,
\\
-\frac{1}{\delta} \vec{q} \otimes \frac{x}{|x|}&\text{for }r\geq |x|\geq r-\delta,
\\
0&\text{for }|x|\leq r-\delta
\end{cases}
\end{align*}
and passing to the limit $\delta\searrow 0$, this yields\footnote{To make this argument rigorous, one makes use of the corresponding arguments from the proof of Lemma~\ref{LemmaBeginMonotonicity} below. Note that the prefactor $\smash{\frac{1}{|\vec{t}_{J_u}(x) \cdot \nu(x)|}}$ in the second term stems from the coarea formula for the rectifiable set $J_u$.} for a.\,e.\ $r\in (0,\dist(x_0,\partial\Omega))$
\begin{align*}
&-\int_{\partial B_r(x_0)\setminus J_u} \big(2 \nabla u \otimes \nabla u - |\nabla u|^2 \Id \big) : (\vec{q} \otimes \nu) \,d\mathcal{H}^1
\\&
+\sum_{x\in J_u\cap \partial B_r(x_0)} \frac{1}{|\vec{t}_{J_u}(x) \cdot \nu(x)|} \vec{t}_{J_u}(x)\otimes \vec{t}_{J_u}(x) : (\vec{q} \otimes \nu(x)) =0
\end{align*}
with the (outward) unit normal vector field $\nu$ of $\partial B_r(x_0)$.
For computational convenience and without loss of generality (due to translational and rotational invariance of the problem and the statement of the proposition), from now on assume that $x_0=0$ and $\vec{q}=e_1$. Transforming to radial coordinates, we obtain with $\tau:=\nu^\perp$, $\partial_\nu u := \nu\cdot \nabla u$, and $\partial_\tau u := \tau \cdot \nabla u$ (and thus $\nabla u=\partial_\nu u\,\nu + \partial_\tau u\,\tau$)
\begin{align*}
&r\int_{0}^{2\pi} 2 (e_1 \cdot \nu) |\partial_\nu u|^2 + 2(e_1 \cdot \tau) \partial_\tau u \, \partial_\nu u - |\nabla u|^2 e_1 \cdot \nu \,d\varphi
=
\sum_{x\in J_u\cap \partial B_r(x_0)} e_1 \cdot \vec{t}_{J_u}(x).
\end{align*}
Using $\nu=(\cos \varphi,\sin \varphi)$ and $\tau=(-\sin \varphi,\cos \varphi)$, this may be rewritten as
\begin{align*}
r\int_{0}^{2\pi} \cos(\varphi) (|\partial_\nu u|^2-|\partial_\tau u|^2) - 2 \sin(\varphi) \partial_\tau u \, \partial_\nu u \,d\varphi
=
\sum_{x\in J_u\cap \partial B_r(x_0)} e_1 \cdot \vec{t}_{J_u}(x).
\end{align*}
The elementary inequality $-2\sin(\varphi) \partial_\tau u \, \partial_\nu u \leq (1+\cos(\varphi)) |\partial_\tau u|^2+\frac{\sin^2(\varphi)}{1+\cos(\varphi)}|\partial_\nu u|^2$ and the elementary identity $\frac{\sin^2(\varphi)}{1+\cos(\varphi)}=1-\cos(\varphi)$ yield
\begin{align*}
r\int_{0}^{2\pi} |\partial_\nu u|^2+|\partial_\tau u|^2 \,d\varphi
\geq \sum_{x\in J_u\cap \partial B_r(x_0)} e_1 \cdot \vec{t}_{J_u}(x)
\end{align*}
and hence
\begin{align*}
\int_{\partial B_r(x_0)\setminus J_u} |\nabla u|^2 \,d\mathcal{H}^1
\geq
\sum_{x\in J_u\cap \partial B_r(x_0)} e_1 \cdot \vec{t}_{J_u}(x).
\end{align*}
By rotational invariance of the problem, the latter estimate holds true for any choice $\vec{q}\in \mathbb{S}^1$ in place of $e_1$, proving the first part of our proposition. In particular, in the case $\#\{J_u\cap \partial B_r(x_0)\}=1$ we may choose $\vec{q}=\vec{t}_{J_u}(x')$, establishing the second estimate in the proposition.
\end{proof}

\section{Proof of the sharp density lower bound}

With the lower bound on the Dirichlet energy on circles at hand and assuming for the moment the validity of our monotonicity formula, we establish the sharp density lower bound stated in Theorem~\ref{TheoremDensityLowerBound}.
\begin{proof}[Proof of Theorem~\ref{TheoremDensityLowerBound}]
We in fact prove the slightly stronger statement that if $x_0$ is a singular point of the minimizer $u$, we have for a.\,e.\ $0<r<\dist(x_0,\partial\Omega)$
\begin{align}
\label{LowerBoundRadialSlice}
\int_{\partial B_r \setminus J_u} |\nabla u|^2 \,d\mathcal{H}^1+\#\{J_u\cap \partial B_r(x_0)\} \geq 2.
\end{align}
Integrating the estimate \eqref{LowerBoundRadialSlice} with respect to $r$ directly entails Theorem~\ref{TheoremDensityLowerBound}, as
\begin{align*}
\mathcal{H}^1(J_u \cap B_R(x_0))
\geq \int_0^R \sum_{x\in J_u \cap \partial B_r(x_0)} \frac{1}{|\nu(x)\cdot \vec{t}_{J_u}(x)|} \,dr
\end{align*}
(see the proof of Lemma~\ref{LemmaBeginMonotonicity} below for a sketch of the argument for the latter estimate).

To prove \eqref{LowerBoundRadialSlice}, we distinguish three cases:
\begin{itemize}
\item In case $\#\{J_u\cap \partial B_r(x_0)\}\geq 2$, the estimate \eqref{LowerBoundRadialSlice} is immediate.
\item In case $\#\{J_u\cap \partial B_r(x_0)\}=1$, the estimate \eqref{LowerBoundRadialSlice} follows directly from Proposition~\ref{PropositionLowerBoundRadialDirichletEnergyCrack}.
\item It only remains to consider the case $\#\{J_u\cap \partial B_r\}=0$.
\end{itemize}

The remainder of the proof is thus concerned with the argument for \eqref{LowerBoundRadialSlice} in the third case $\#\{J_u\cap \partial B_r\}=0$.

We first observe that around a singular point $x_0$ we must have $F(r,x_0)\geq 1$ for any $r\in (0,\dist(x_0,\partial \Omega))$: If we have $F(r,x_0)<1$ for some radius $r$, the monotonicity formula from Theorem~\ref{TheoremMonotonicityFormula} implies $\liminf_{\rho \searrow 0} \frac{1}{\rho}\int_{B_\rho(x_0)} |\nabla u|^2 \,dx=0$. This in turn would entail that one of the following two alternatives holds: $u$ is either smooth in a neighborhood of $x_0$, or we have $x_0\in J_u$ with $J_u$ being of class $C^{1,1}$ in a neighborhood of $x_0$ \cite{AmbrosioFuscoPallaraPartialRegularity}. The former possibility is excluded by our assumption that $u$ has a singularity at $x_0$; the latter possibility implies $\liminf_{\rho \searrow 0} F(\rho) \geq \liminf_{\rho \searrow 0} \frac{\mathcal{H}^1(J_u\cap B_r(x_0))}{2r} = 1$, which would give $F(r,x_0)\geq 1$ for all $0<r<\dist(x_0,\partial\Omega)$ by monotonicity.

Next, we estimate the energy $E(r,x_0)$ by a competitor argument. Indeed, the standard competitor construction in Lemma~\ref{LemmaCompetitorZeroIntersection} below entails that in our case of $\#\{J_u\cap \partial B_r\}=0$ we must have
\begin{align*}
E(r,x_0) \leq \int_{\partial B_r(x_0)} |\partial_\tau u|^2 \,d\mathcal{H}^1
\end{align*}
since $u$ is a minimizer.
Furthermore, by the David--L\'eger--Maddalena--Solimini relation \eqref{DavidLegerEtAl} and $\#\{J_u\cap \partial B_r\}=0$ we get
\begin{align*}
\frac{\mathcal{H}^1(J_u\cap B_r(x_0))}{r}=\int_{\partial B_r(x_0)} |\partial_\tau u|^2-|\partial_\nu u|^2 \,d\mathcal{H}^1.
\end{align*}
The previous two formulas imply
\begin{align*}
F(r,x_0) =  E(r,x_0) - \frac{1}{2}\cdot \frac{\mathcal{H}^1(J_u\cap B_r(x_0))}{r}
\leq \frac{1}{2} \int_{\partial B_r(x_0)} |\partial_\tau u|^2+|\partial_\nu u|^2 \,d\mathcal{H}^1.
\end{align*}
Recalling that $F(r,x_0)\geq 1$, this entails \eqref{LowerBoundRadialSlice}.
\end{proof}

\begin{lemma}
\label{LemmaCompetitorZeroIntersection}
Let $u\in \SBV(\Omega)$ be such that $J_u\cap \partial B_r(x_0)=\emptyset$ and $\int_{\partial B_r(x_0)} |\partial_\tau u|^2 \,d\mathcal{H}^1<\infty$. Then there exists a function $v\in H^1(B_r(x_0))$ with $u|_{\partial B_r(x_0)}=v|_{\partial B_r(x_0)}$ satisfying
\begin{align*}
\frac{1}{r}\int_{B_r(x_0)} |\nabla v|^2 \,dx \leq \int_{\partial B_r(x_0)} |\partial_\tau u|^2 \,d\mathcal{H}^1.
\end{align*}
\end{lemma}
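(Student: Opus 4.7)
The plan is to take the explicit harmonic extension of the boundary trace and verify the estimate via Fourier series on the circle. After translating so that $x_0=0$, the hypotheses $J_u\cap\partial B_r=\emptyset$ and $\partial_\tau u\in L^2(\partial B_r)$ mean that the trace $g(\varphi):=u(r\cos\varphi,r\sin\varphi)$ is well-defined and lies in $H^1(S^1)$. So I would simply \emph{define} $v$ on $B_r$ as the harmonic extension of $g$, written in polar coordinates as
\begin{align*}
v(\rho,\varphi) := \sum_{k\in\mathbb{Z}} \hat g_k \left(\frac{\rho}{r}\right)^{|k|} e^{ik\varphi},
\qquad g(\varphi)=\sum_{k\in\mathbb{Z}} \hat g_k e^{ik\varphi}.
\end{align*}
This manifestly satisfies $v|_{\partial B_r}=u|_{\partial B_r}$ and belongs to $H^1(B_r)$ as soon as $g\in H^{1/2}(\partial B_r)$, which is guaranteed (in fact we have the stronger $g\in H^1$).

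The heart of the argument is then a direct Plancherel computation. Using $|\nabla v|^2=|\partial_\rho v|^2+\rho^{-2}|\partial_\varphi v|^2$, the two contributions evaluate to the same quantity $\tfrac{k^2\rho^{2|k|-2}}{r^{2|k|}}$ mode-by-mode, and integrating against $\rho\,d\rho$ on $(0,r)$ yields $\tfrac{|k|}{2}$ for each nonzero $k$. After multiplying by $2\pi$ from the $\varphi$-integral one obtains
\begin{align*}
\frac{1}{r}\int_{B_r} |\nabla v|^2 \,dx = \frac{2\pi}{r}\sum_{k\in\mathbb{Z}} |k|\,|\hat g_k|^2.
\end{align*}
On the boundary, using $\partial_\tau u=r^{-1}g'(\varphi)$,
\begin{align*}
\int_{\partial B_r} |\partial_\tau u|^2 \,d\mathcal{H}^1 = \frac{1}{r}\int_0^{2\pi}|g'(\varphi)|^2\,d\varphi = \frac{2\pi}{r}\sum_{k\in\mathbb{Z}} k^2\,|\hat g_k|^2.
\end{align*}
The desired inequality then follows from the elementary pointwise bound $|k|\leq k^2$ valid for every integer $k$ (with equality when $|k|\le 1$).

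There is no real obstacle here: the only mild subtlety is justifying that $g$ indeed has a well-defined $H^1(\partial B_r)$-trace in the $\SBV$ setting, but this is immediate from $J_u\cap\partial B_r=\emptyset$ together with $\partial_\tau u\in L^2(\partial B_r)$, since $u$ then admits a one-sided trace from either the interior or the exterior that agrees $\mathcal{H}^1$-a.e.\ on $\partial B_r$. One could equivalently phrase the argument without Fourier series by invoking the variational characterization of the harmonic extension as the $H^1$-minimizer with given boundary data, but the Fourier route is the most transparent way to obtain the sharp constant $1$ (rather than a large universal constant coming from abstract trace inequalities).
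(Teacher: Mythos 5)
Your proof is correct and follows essentially the same route as the paper: you take the harmonic extension of the boundary trace (written in complex Fourier modes rather than the paper's real cosine/sine series) and compare the Dirichlet energy $\propto\sum_k |k|\,|\hat g_k|^2$ with the boundary tangential energy $\propto\sum_k k^2\,|\hat g_k|^2$ mode by mode. The two computations are equivalent, and the concluding inequality $|k|\le k^2$ is the same elementary observation.
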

\begin{proof}
In order to construct such an extension $v$ of the boundary data of $u$ on $\partial B_r(x_0)$, we decompose $u|_{\partial B_r(x_0)}$ into its Fourier modes
\begin{align*}
(u|_{\partial B_r(x_0)})(r,\varphi) = \sum_{k\geq 0} a_k \cos(k\varphi)+\sum_{k\geq 1} b_k \sin(k\varphi)
\end{align*}
(with $(a_k)_k, (b_k)_k\in \ell_2$) and observe that
\begin{align*}
r\int_{\partial B_r(x_0)} |\partial_\tau u|^2 \,d\mathcal{H}^1
=\sum_{k\geq 1} \pi k^2 |a_k|^2 + \sum_{k\geq 1} \pi k^2 |b_k|^2.
\end{align*}
We then build an extension $v$ as
\begin{align*}
v(\rho,\varphi) := \sum_{k\geq 0} a_k \Big(\frac{\rho}{r}\Big)^{k} \cos(k\varphi)+\sum_{k\geq 1} b_k \Big(\frac{\rho}{r}\Big)^{k} \sin(k\varphi)
\end{align*}
and observe that
\begin{align*}
\int_{B_r(x_0)} |\nabla v|^2 \,dx
=\sum_{k\geq 1} \pi k |a_k|^2+\sum_{k\geq 1} \pi k |b_k|^2.
\end{align*}
The previous two formulas imply the estimate of the lemma.
\end{proof}

The following formula has been discovered by David and L\'eger \cite{DavidLeger} and Maddalena and Solimini \cite{MaddalenaSolimini}. 
\begin{lemma}
Let $\Omega\subset \mathbb{R}^2$ be a planar domain and let $u\in \SBV(\Omega)$ be a minimizer of the Mumford-Shah functional \eqref{MS}.
Then the David--L\'eger--Maddalena--Solimini relation
\begin{align}
\label{DavidLegerEtAl}
&\int_{\partial B_r(x_0)} |\partial_\tau u|^2 \,d\mathcal{H}^1
+\sum_{x\in J_u\cap \partial B_r(x_0)} |\nu(x) \cdot \vec{t}_{J_u}(x)|
\\&
\nonumber
= \int_{\partial B_r(x_0)} |\partial_\nu u|^2 \,d\mathcal{H}^1 + \frac{1}{r} \mathcal{H}^1(J_u\cap B_r(x_0))
\end{align}
holds for a.\,e.\ $0<r<\dist(x_0,\partial\Omega)$.
\end{lemma}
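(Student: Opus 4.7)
I would derive \eqref{DavidLegerEtAl} from the equilibrium (inner-variation) equation \eqref{EquilibriumEquation} already recorded in the paper, by a clever choice of test vector field, following exactly the approximation scheme used in the proof of Proposition~\ref{PropositionLowerBoundRadialDirichletEnergyCrack}. The key test field is the radial one
\begin{align*}
\eta(x) := (x-x_0)\,\varphi_\delta(|x-x_0|),
\end{align*}
where $\varphi_\delta$ is a radial cut-off equal to $1$ on $B_{r-\delta}(x_0)$, vanishing outside $B_r(x_0)$, and affine in between. For this $\eta$ one computes
\begin{align*}
\nabla \eta = \varphi_\delta\,\Id - \tfrac{1}{\delta}\,|x-x_0|\,\nu\otimes\nu \quad \text{on } B_r(x_0)\setminus B_{r-\delta}(x_0),
\qquad \nabla\eta=\Id \quad\text{on }B_{r-\delta}(x_0).
\end{align*}

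The crucial observation is purely dimensional: in two dimensions one has $(2\nabla u\otimes \nabla u-|\nabla u|^2\Id):\Id = 2|\nabla u|^2-2|\nabla u|^2=0$, so the bulk part of the equilibrium equation receives \emph{no} contribution from the interior $B_{r-\delta}(x_0)$. Only the annulus contributes, where
\begin{align*}
(2\nabla u\otimes \nabla u-|\nabla u|^2\Id):\nabla\eta
= -\tfrac{|x-x_0|}{\delta}\big(|\partial_\nu u|^2-|\partial_\tau u|^2\big).
\end{align*}
Applying the standard coarea formula and using continuity of the circular slices at almost every $r$, this term converges as $\delta\searrow 0$ to $-r\int_{\partial B_r(x_0)}(|\partial_\nu u|^2-|\partial_\tau u|^2)\dH[1]$.

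For the interface term one has $\vec{t}_{J_u}\otimes\vec{t}_{J_u}:\nabla\eta = \varphi_\delta-\tfrac{|x-x_0|}{\delta}(\nu\cdot\vec{t}_{J_u})^2\chi_{\mathrm{annulus}}$. The first piece converges to $\mathcal{H}^1(J_u\cap B_r(x_0))$ as $\delta\searrow 0$ (since for a.e.\ $r$ the jump set puts no mass on $\partial B_r(x_0)$). The second piece is the delicate step: one invokes the coarea formula for the rectifiable set $J_u$ with respect to the radial distance $\Phi(x)=|x-x_0|$, whose tangential gradient along $J_u$ has length $|\nu\cdot \vec{t}_{J_u}|$, yielding
\begin{align*}
\int_{J_u}f\dH[1]=\int_0^\infty\sum_{x\in J_u\cap \partial B_s(x_0)}\frac{f(x)}{|\nu(x)\cdot\vec{t}_{J_u}(x)|}\dt.
\end{align*}
Applying this with $f=\delta^{-1}|x-x_0|(\nu\cdot \vec{t}_{J_u})^2\chi_{\mathrm{annulus}}$, the two factors of $|\nu\cdot \vec{t}_{J_u}|$ combine to a single one, and the limit $\delta\searrow 0$ produces $r\sum_{x\in J_u\cap \partial B_r(x_0)}|\nu(x)\cdot\vec{t}_{J_u}(x)|$.

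Collecting both limits in \eqref{EquilibriumEquation} and dividing by $r$ gives exactly \eqref{DavidLegerEtAl}. The main obstacle is the rigorous passage to the limit $\delta\searrow 0$, which requires selecting radii $r$ that are Lebesgue points for the various slicing quantities and invoking the coarea formula for $J_u$ in the non-smooth interface term; all of this is entirely parallel to the approximation argument already carried out for Proposition~\ref{PropositionLowerBoundRadialDirichletEnergyCrack}, so no new technique is needed.
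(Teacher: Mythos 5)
Your proof is correct and uses exactly the same test vector field $\eta(x)=(x-x_0)\varphi_\delta(|x-x_0|)$ as the paper's sketch, plugged into the equilibrium equation \eqref{EquilibriumEquation} and passed to the limit $\delta\searrow 0$. The paper omits the computational details (referring to the literature), while you supply them—including the dimensional identity $(2\nabla u\otimes\nabla u-|\nabla u|^2\Id):\Id=0$ in 2D and the coarea argument that produces the factor $|\nu\cdot\vec{t}_{J_u}|$—all of which check out.
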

\begin{proof}
We recall that this formula may be derived from the equilibrium equation \eqref{EquilibriumEquation} by considering the family of variations 
\begin{align*}
\eta:=
\begin{cases}
0&\text{for }|x|\geq r,
\\
\frac{r-|x|}{\delta} x&\text{for }r\geq |x|\geq r-\delta,
\\
x&\text{for }|x|\leq r-\delta,
\end{cases}
\end{align*}
and letting $\delta\searrow 0$. We omit the technical details, as the formal derivation is straightforward and the technical details may be found e.\,g.\ in \cite{DeLellisFocardi}.
\end{proof}

\section{Proof of the monotonicity formula}

Our proof of the monotonicity formula combines the ideas of David-L\'eger \cite{DavidLeger} with the lower bound on the Dirichlet energy on circles $\partial B_r(x_0)$ established in Section~\ref{SectionNewFormula}.

\begin{lemma}
\label{LemmaBeginMonotonicity}
Let $\Omega\subset \mathbb{R}^2$ be a planar domain, let $x_0\in \Omega$, and let $u$ be a minimizer of the Mumford-Shah functional on $\Omega$. Then the David-L\'eger entropy \eqref{Entropy} is subject to the estimate
\begin{subequations}
\begin{align}
\label{BeginMonotonicity}
&\frac{d}{dr} \min\Big\{F(r,x_0),\frac{3}{2}\Big\}
\geq \frac{1}{r} D(r,x_0)
\end{align}
in the sense of distributions, where
\begin{align}
\label{DefinitionD1}
D(r,x_0):&=
\chi_{F(r,x_0)<\tfrac{3}{2}} \bigg(\int_{\partial B_r(x_0)\setminus J_u} |\partial_\tau u|^2 + |\partial_\nu u|^2 \,d\mathcal{H}^1
\\&~~~~~~~~~~~~~~~~~
\nonumber
+\frac{1}{2}\sum_{x\in J_u \cap \partial B_r(x_0)} \frac{1}{|\nu(x) \cdot \vec{t}_{J_u}(x)|} - F(r,x_0) \bigg).
\end{align}
Additionally, for a.\,e.\ $0<r<\dist(x_0,\partial \Omega)$ we have the representation
\begin{align}
\label{DefinitionD2}
D(r,x_0) &= \chi_{F(r,x_0)<\tfrac{3}{2}} \bigg(\int_{\partial B_r(x_0)\setminus J_u} \frac{3}{2} |\partial_\tau u|^2 + \frac{1}{2} |\partial_\nu u|^2 \,d\mathcal{H}^1
\\&~~~~~~~~~~~~~~~~~~
\nonumber
+\frac{1}{2}\sum_{x\in J_u \cap \partial B_r(x_0)} \bigg( \frac{1}{|\nu(x) \cdot \vec{t}_{J_u}(x)|} + |\nu(x) \cdot \vec{t}_{J_u}(x)|\bigg) - E(r,x_0) \bigg).
\end{align}
\end{subequations}
\end{lemma}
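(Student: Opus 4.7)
The starting point is the identity $rF(r,x_0)=\int_{B_r(x_0)}|\nabla u|^2\dx+\tfrac{1}{2}\mathcal{H}^1(J_u\cap B_r(x_0))$, which exhibits $rF(\cdot,x_0)$ as a sum of two non-decreasing functions of $r$. The first summand is absolutely continuous with classical derivative $\int_{\partial B_r(x_0)}|\nabla u|^2\dH[1]$ for a.e.\ $r$. For the second summand I would apply the coarea formula for the rectifiable set $J_u$ to the distance function $f(x):=|x-x_0|$, whose tangential Jacobian along $J_u$ equals $|\nu(x)\cdot\vec{t}_{J_u}(x)|$; this yields
\begin{align*}
\mathcal{H}^1(J_u\cap B_r(x_0))=\int_0^r\sum_{x\in J_u\cap\partial B_\rho(x_0)}\frac{1}{|\nu(x)\cdot\vec{t}_{J_u}(x)|}\,d\rho,
\end{align*}
whose pointwise derivative produces exactly the $\tfrac{1}{2}\sum|\nu\cdot\vec{t}_{J_u}|^{-1}$ contribution in~\eqref{DefinitionD1}. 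Rearranging $rF'(r,x_0)=\frac{d}{dr}(rF(r,x_0))-F(r,x_0)$ then reproduces, a.e.\ in $r$, the full expression inside the parentheses of~\eqref{DefinitionD1}.

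To bring in the truncation I would exploit that $rF(r,x_0)$ is non-decreasing, so that $F(\cdot,x_0)$ is of bounded variation on every compact subinterval of $(0,\dist(x_0,\partial\Omega))$ with only non-negative jumps (the smooth prefactor $1/r$ cannot introduce downward jumps). Applying the BV chain rule to the non-decreasing Lipschitz map $\phi(t):=\min\{t,\tfrac{3}{2}\}$, which satisfies $\phi'(t)=\chi_{t<3/2}$, then yields
\begin{align*}
\frac{d}{dr}\min\Big\{F(r,x_0),\tfrac{3}{2}\Big\}\geq\chi_{F(r,x_0)<3/2}\,F'_{ac}(r,x_0)
\end{align*}
in the distributional sense: the absolutely continuous part of $D(\phi\circ F)$ equals $\phi'(F)F'_{ac}$, while its jump (and Cantor) parts are non-negative under our sign conditions. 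Combining this with the a.e.\ identity from the first step establishes~\eqref{BeginMonotonicity} with $D$ given by~\eqref{DefinitionD1}.

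Finally, the equivalent representation~\eqref{DefinitionD2} is obtained by a direct algebraic substitution. I would rewrite the David--L\'eger--Maddalena--Solimini relation~\eqref{DavidLegerEtAl} in the form
\begin{align*}
\tfrac{1}{r}\mathcal{H}^1(J_u\cap B_r(x_0))=\int_{\partial B_r(x_0)\setminus J_u}(|\partial_\tau u|^2-|\partial_\nu u|^2)\dH[1]+\sum_{x\in J_u\cap\partial B_r(x_0)}|\nu(x)\cdot\vec{t}_{J_u}(x)|,
\end{align*}
and use $F(r,x_0)=E(r,x_0)-\tfrac{1}{2r}\mathcal{H}^1(J_u\cap B_r(x_0))$ to replace $F(r,x_0)$ inside~\eqref{DefinitionD1}. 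The coefficients of $|\partial_\tau u|^2$ and $|\partial_\nu u|^2$ then combine into $\tfrac{3}{2}$ and $\tfrac{1}{2}$ respectively, while $\tfrac{1}{2}\sum|\nu\cdot\vec{t}_{J_u}|^{-1}$ joins the newly generated $\tfrac{1}{2}\sum|\nu\cdot\vec{t}_{J_u}|$ into the symmetric sum appearing in~\eqref{DefinitionD2}. The main technical delicacy is the passage from the pointwise a.e.\ identity for $F'(r,x_0)$ to the distributional inequality for the truncated quantity $\min\{F,\tfrac{3}{2}\}$, i.\,e.\ the correct use of the BV chain rule together with the coarea-type differentiation of $\mathcal{H}^1(J_u\cap B_r(x_0))$; once these are in place, the remainder of the proof is algebraic, and no variational property of $u$ is used in this lemma apart from (in the last step) the DLMS relation.
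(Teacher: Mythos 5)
Your argument follows essentially the same route as the paper: differentiate $rF(r,x_0)$ by splitting it into the Dirichlet and jump-set parts (Fubini for the former, a coarea-type differentiation for the latter), apply the BV chain rule to the nondecreasing Lipschitz truncation $\phi(t)=\min\{t,\tfrac32\}$ (using that the singular part of $dF$ is nonnegative, so both the jump and Cantor contributions to $d\phi(F)$ have the right sign), and finally substitute the David--L\'eger--Maddalena--Solimini relation \eqref{DavidLegerEtAl} to convert \eqref{DefinitionD1} into \eqref{DefinitionD2}. This is exactly the paper's proof (which also explicitly offers the coarea formula \cite[Theorem~2.93]{AmbrosioFuscoPallara} as the route that avoids invoking partial regularity).

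One small overclaim: you assert the exact identity
\begin{equation*}
\mathcal{H}^1(J_u\cap B_r(x_0))=\int_0^r\sum_{x\in J_u\cap\partial B_\rho(x_0)}\frac{1}{|\nu(x)\cdot\vec{t}_{J_u}(x)|}\,d\rho,
\end{equation*}
but the coarea formula for rectifiable sets, with coarea factor $|\nu\cdot\vec{t}_{J_u}|$ for the map $x\mapsto|x-x_0|$, only identifies the right-hand side with $\mathcal{H}^1\bigl(\{x\in J_u\cap B_r(x_0):\nu(x)\cdot\vec{t}_{J_u}(x)\neq 0\}\bigr)$; any portion of $J_u$ tangent to the circles around $x_0$ is invisible to the radial slicing, so in general one only has $\geq$. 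The paper accordingly only states the inequality $\frac{d}{dr}\mathcal{H}^1(B_r\cap J_u)\geq\sum_{x}|\nu(x)\cdot\vec{t}_{J_u}(x)|^{-1}$, and since \eqref{BeginMonotonicity} is itself an inequality in the favorable direction, this is all that is needed. Apart from this inessential inaccuracy, the proposal is correct and complete.
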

\begin{proof}
The statement and the proof of the lemma differ from \cite[Proposition~2.5 and formulas (2.11), (3.7)]{DavidLeger} only due to the additional truncation $\min\{\cdot,\frac{3}{2}\}$, requiring a single application of the chain rule for weak derivatives. Nevertheless, we sketch the full argument to make our paper self-contained.

To show \eqref{BeginMonotonicity} with the representation \eqref{DefinitionD1}, it is sufficient to observe that
\begin{align}
\label{FormulaDerivativeF}
&r \frac{d}{dr} F(r,x_0)
\\&
\nonumber
\geq 
\int_{\partial B_r(x_0)\setminus J_u} |\partial_\tau u|^2 + |\partial_\nu u|^2 \,d\mathcal{H}^1
+\frac{1}{2}\sum_{x\in J_u \cap \partial B_r(x_0)} \frac{1}{|\nu(x) \cdot \vec{t}_{J_u}(x)|} - F(r,x_0)
\end{align}
holds in the sense of distributions. Indeed, applying the chain rule for weak derivatives to this formula, we arrive at \eqref{BeginMonotonicity}-\eqref{DefinitionD1}.

As in \cite{DavidLeger}, the representation \eqref{DefinitionD2} follows directly from \eqref{DefinitionD1} using the David--L\'eger--Maddalena--Solimini relation \eqref{DavidLegerEtAl}. It therefore only remains to show \eqref{FormulaDerivativeF}.

To prove \eqref{FormulaDerivativeF}, we first write $E_{Dir}(r,x_0) := \frac{1}{r} \int_{B_r\setminus J_u} |\nabla u|^2 \,dx$. From Fubini's theorem and $\nabla u\in L^2(\Omega)$, we infer $E_{Dir}(r,x_0)=\frac{1}{r} \int_0^r \int_{\partial B_\rho \setminus J_u} |\nabla u|^2 \,d\mathcal{H}^1 \,d\rho$ and thus
\begin{align*}
\frac{d}{dr} E_{Dir}(r,x_0)
=\frac{1}{r}\int_{\partial B_r(x_0)\setminus J_u} |\partial_\tau u|^2 + |\partial_\nu u|^2 \,d\mathcal{H}^1 -\frac{1}{r} E_{Dir}(r).
\end{align*}
To establish \eqref{FormulaDerivativeF}, it hence suffices to show
\begin{align*}
\frac{d}{dr} \mathcal{H}^1(B_r\cap J_u)
\geq \sum_{x\in J_u \cap \partial B_r(x_0)} \frac{1}{|\nu(x) \cdot \vec{t}_{J_u}(x)|}
\end{align*}
in the sense of distributions. To see this, note that outside of a set of negligible $\mathcal{H}^1$ measure, the jump set $J_u$ is locally given by a $C^1$ curve \cite{AmbrosioFuscoPallaraPartialRegularity}. By Sard's theorem, this shows that for a.\,e.\ $r\in (0,\dist(x_0,\partial\Omega))$ the set $J_u\cap \partial B_r(x_0)$ is given by the intersection of a finite number of $C^1$ curves with $\partial B_r(x_0)$, with all of these intersections occurring transversally (i.\,e., the scalar product $\nu(x)\cdot \vec{t}_{J_u}(x)$ being nonzero). For a finite collection of $C^1$ curves, however, the estimate becomes an equality and is shown by a straightforward computation. Alternatively, to avoid the use of partial regularity of minimizers, the previous formula could also be obtained via the coarea formula for rectifiable sets \cite[Theorem 2.93]{AmbrosioFuscoPallara}.
\end{proof}

The main step for the derivation of the monotonicity formula is the derivation of the following estimates, which we will perform in the following subsections.
\begin{lemma}
\label{LemmaEstimates}
With the assumptions and the notation of Lemma~\ref{LemmaBeginMonotonicity}, we have
\begin{subequations}
\begin{align}
\label{EstimateZeroDiscontinuityPoints}
D(r,x_0)&\geq \chi_{F(r,x_0)<\tfrac{3}{2}} \frac{1}{2} \int_{\partial B_r(x_0)\setminus J_u} |\nabla u|^2 \,d\mathcal{H}^1
&\text{if }\#\{J_u\cap \partial B_r(x_0)\}=0,
\\
D(r,x_0)&\geq \big(\tfrac{3}{2}-F(r,x_0)\big)_+
\label{EstimateOneDiscontinuityPoint}
&\text{if }\#\{J_u\cap \partial B_r(x_0)\}=1,
\\
\label{EstimateTwoDiscontinuityPoints}
D(r,x_0)&\geq \chi_{F(r,x_0)<\tfrac{3}{2}} c \int_{\partial B_r(x_0)\setminus J_u} |\nabla u|^2 \,d\mathcal{H}^1
&\text{if }\#\{J_u\cap \partial B_r(x_0)\}=2,
\\
\label{EstimateThreeDiscontinuityPoints}
D(r,x_0)&\geq \big(\tfrac{3}{2}-F(r,x_0)\big)_+
&\text{if }\#\{J_u\cap \partial B_r(x_0)\}\geq 3.
\end{align}
\end{subequations}
\end{lemma}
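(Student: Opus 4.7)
The plan is to prove the four estimates \eqref{EstimateZeroDiscontinuityPoints}--\eqref{EstimateThreeDiscontinuityPoints} case by case, each time combining one of the two representations \eqref{DefinitionD1}--\eqref{DefinitionD2} of $D(r,x_0)$ with Proposition~\ref{PropositionLowerBoundRadialDirichletEnergyCrack}, the trivial bound $|\nu(x)\cdot\vec{t}_{J_u}(x)|^{-1}\geq 1$, and competitor arguments.

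The cases $\#\{J_u\cap\partial B_r(x_0)\}=1$ and $\#\{J_u\cap\partial B_r(x_0)\}\geq 3$ I expect to follow directly from \eqref{DefinitionD1}. In the former case, applying Proposition~\ref{PropositionLowerBoundRadialDirichletEnergyCrack} with $\vec{q}=\vec{t}_{J_u}(x')$ at the unique intersection point $x'$ yields $\int_{\partial B_r(x_0)\setminus J_u}|\nabla u|^2\,d\mathcal{H}^1\geq 1$, while the single summand contributes at least $\tfrac{1}{2}$, so the parenthesis in \eqref{DefinitionD1} is bounded below by $\tfrac{3}{2}-F(r,x_0)$, giving \eqref{EstimateOneDiscontinuityPoint}. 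In the latter case, the three (or more) terms of $\tfrac{1}{2}\sum_x|\nu\cdot\vec{t}_{J_u}|^{-1}$ are each $\geq\tfrac{1}{2}$ and hence already sum to $\geq\tfrac{3}{2}$, yielding \eqref{EstimateThreeDiscontinuityPoints} by the same reasoning.

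For the case $\#\{J_u\cap\partial B_r(x_0)\}=0$, I would use the second representation \eqref{DefinitionD2} together with the competitor $v$ of Lemma~\ref{LemmaCompetitorZeroIntersection}. Since $u|_{\partial B_r(x_0)}$ has no jumps, $v$ is admissible inside $B_r(x_0)$ with $J_v\cap B_r(x_0)=\emptyset$, and minimality of $u$ yields
\begin{align*}
rE(r,x_0)\leq \int_{B_r(x_0)}|\nabla v|^2\,dx\leq r\int_{\partial B_r(x_0)}|\partial_\tau u|^2\,d\mathcal{H}^1.
\end{align*}
Substituting into \eqref{DefinitionD2} and using the identity $\tfrac{3}{2}|\partial_\tau u|^2+\tfrac{1}{2}|\partial_\nu u|^2-|\partial_\tau u|^2=\tfrac{1}{2}|\nabla u|^2$ produces \eqref{EstimateZeroDiscontinuityPoints} with explicit constant $\tfrac{1}{2}$.

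The main obstacle is the case $\#\{J_u\cap\partial B_r(x_0)\}=2$. Here, representation \eqref{DefinitionD1} combined with the trivial bound $\tfrac{1}{2}\sum_x |\nu\cdot\vec{t}_{J_u}|^{-1}\geq 1$ already gives $D(r,x_0)\geq \chi_{F(r,x_0)<\frac{3}{2}}\bigl(\int_{\partial B_r(x_0)\setminus J_u}|\nabla u|^2\,d\mathcal{H}^1+1-F(r,x_0)\bigr)$, which suffices whenever $F(r,x_0)\leq 1$. To handle the remaining regime $1\leq F(r,x_0)<\tfrac{3}{2}$, I would construct a chord competitor $v$ whose jump set inside $B_r(x_0)$ is the straight segment $[x_1,x_2]$, defined on each of the two half-disks as an explicit extension of the (continuous) trace $u|_{\partial B_r\setminus\{x_1,x_2\}}$ via a Fourier-type decomposition analogous to Lemma~\ref{LemmaCompetitorZeroIntersection} after a conformal parametrization of each half-disk. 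Minimality of $u$ then bounds $rE(r,x_0)$ above by $Cr\int_{\partial B_r(x_0)}|\partial_\tau u|^2\,d\mathcal{H}^1+|x_1-x_2|$; combining this with representation \eqref{DefinitionD2}, the inequality $\tfrac{1}{2}(\alpha_i+\alpha_i^{-1})\geq 1$ (with $\alpha_i=|\nu(x_i)\cdot\vec{t}_{J_u}(x_i)|$), and a further application of Proposition~\ref{PropositionLowerBoundRadialDirichletEnergyCrack} with a suitable $\vec{q}$ (such as $(\vec{t}_1+\vec{t}_2)/|\vec{t}_1+\vec{t}_2|$) should extract the required universal fraction $c\int_{\partial B_r(x_0)\setminus J_u}|\nabla u|^2\,d\mathcal{H}^1$. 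I anticipate the most delicate step to be the saturating regime in which both $\alpha_i$ are close to $1$ and $x_1,x_2$ are nearly diametrically opposite, so that $|x_1-x_2|\approx 2r$ and each $\alpha_i+\alpha_i^{-1}$ sits near its minimum value $2$; balancing all geometric defects by a single universal constant $c$ is the heart of the argument.
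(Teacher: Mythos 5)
Your handling of the cases $\#\{J_u\cap\partial B_r(x_0)\}\in\{0,1\}$ and $\geq 3$ is correct and coincides with the paper's argument: for one point you invoke Proposition~\ref{PropositionLowerBoundRadialDirichletEnergyCrack} to get $\int_{\partial B_r\setminus J_u}|\nabla u|^2\geq 1$ and plug into \eqref{DefinitionD1}; for three or more points the trivial bound $|\nu\cdot\vec{t}_{J_u}|^{-1}\geq 1$ suffices in \eqref{DefinitionD1}; for zero points you use Lemma~\ref{LemmaCompetitorZeroIntersection} in \eqref{DefinitionD2}.

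The two-point case, however, contains a genuine gap, and it is precisely the case you flagged as the ``heart of the argument.'' Your initial reduction (the estimate is trivial from \eqref{DefinitionD1} when $F(r,x_0)\leq 1$) is a nice observation, but the remaining regime $1<F(r,x_0)<\tfrac{3}{2}$ is not resolved by the chord competitor you propose, and the issue is quantitative, not merely technical. If $\varphi$ denotes the shorter arc angle, the chord competitor yields at best an estimate of the form $E(r,x_0)\leq C(\varphi)\int_{\partial B_r}|\partial_\tau u|^2\,d\mathcal{H}^1+2\sin(\varphi/2)$, where the extension constant $C(\varphi)$ on the large circular segment grows towards $2$ as $\varphi\to 0$ (the segment exhausts the disk while the boundary data is imposed on a shrinking fraction of the circle). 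Plugging into \eqref{DefinitionD2}, the coefficient $\tfrac{3}{2}-C(\varphi)$ in front of $\int|\partial_\tau u|^2$ can then be negative, and you have no lower bound on $\int|\partial_\tau u|^2$ alone to absorb it; the gain $2-2\sin(\varphi/2)$ and the convexity slack $\tfrac{1}{2}\sum(\alpha_i+\alpha_i^{-1})-2$ have to be traded off against this loss, and the interplay between $\varphi$ and the $\alpha_i$ is exactly what requires a careful argument. Simply observing that Proposition~\ref{PropositionLowerBoundRadialDirichletEnergyCrack} with $\vec{q}=(\vec{t}_1+\vec{t}_2)/|\vec{t}_1+\vec{t}_2|$ gives $\int|\nabla u|^2\geq|\vec{t}_1+\vec{t}_2|$ is the right starting point, but it does not by itself close the estimate.

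For comparison, the paper's route in the two-point case is structured quite differently: it splits on the length $\omega r$ of the \emph{longer} arc rather than on $F$. When $\omega\leq\tfrac{3}{2}\pi-0.00001$, it uses the radii to the center $x_0$ (not the chord) as the competitor's jump set, so that Lemma~\ref{LemmaCompetitorEstimateSector} gives the sharp sector constant $\theta/\pi$, which stays strictly below $\tfrac{3}{2}$ on both sectors and closes via \eqref{DefinitionD2}. When $\omega$ is larger, no competitor is used at all; instead, Lemma~\ref{LemmaLowerBoundTwoPoints} performs exactly the angle optimization you gesture at --- reducing by symmetry to the case $\alpha_1=-\alpha_2$, using convexity to pin the worst configuration, and extracting numerically explicit lower bounds on $\tfrac{1}{2}\sum\alpha_i^{-1}+\int|\nabla u|^2$ and on $\tfrac{1}{2}\sum(\alpha_i+\alpha_i^{-1})$. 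It is this quantitative lemma that handles the saturating regime, and it is the piece your sketch would need in order to become a proof.
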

\begin{proof}[Proof of Theorem~\ref{TheoremMonotonicityFormula}]
Our monotonicity formula follows immediately from Lemma~\ref{LemmaBeginMonotonicity} and Lemma~\ref{LemmaEstimates}, taking into consideration that our desired monotonicity follows directly from estimate \eqref{DefinitionD1} in case $\int_{\partial B_r(x_0)\setminus J_u} |\nabla u|^2 \,d\mathcal{H}^1\geq 2$.
\end{proof}

\subsection{The case of a single discontinuity point on $\partial B_r(x_0)$}

The situation of a single discontinuity point $J_u$ on the boundary $\partial B_r(x_0)$ is the main situation in which David and L\'eger \cite{DavidLeger} were missing a suitable argument for monotonicity. With our lower bound from Proposition~\ref{PropositionLowerBoundRadialDirichletEnergyCrack}, the monotonicity argument becomes trivial for the truncated entropy in this case:
\begin{proof}[Proof of \eqref{EstimateOneDiscontinuityPoint}]
By Proposition~\ref{PropositionLowerBoundRadialDirichletEnergyCrack}, we have
\begin{align*}
\int_{\partial B_r(x_0)\setminus J_u} |\nabla u|^2 \,d\mathcal{H}^1 \geq 1.
\end{align*}
Inserting this into formula \eqref{DefinitionD1}, we deduce \eqref{EstimateOneDiscontinuityPoint}.
\end{proof}

\subsection{The case of zero discontinuity points on $\partial B_r(x_0)$}

We handle this case just like in \cite{DavidLeger}, using the elementary competitor construction of Lemma~\ref{LemmaCompetitorZeroIntersection}.
\begin{proof}[Proof of \eqref{EstimateZeroDiscontinuityPoints}]
By Lemma~\ref{LemmaCompetitorZeroIntersection} we have
\begin{align*}
E(r,x_0)\leq \int_{\partial B_r(x_0)\setminus J_u} |\partial_\tau u|^2 \,d\mathcal{H}^1.
\end{align*}
Using formula \eqref{DefinitionD2}, we obtain
\begin{align*}
D(r,x_0)\geq \chi_{F(r,x_0)<\frac{3}{2}} \frac{1}{2} \int_{\partial B_r(x_0)\setminus J_u} |\nabla u|^2 \,d\mathcal{H}^1.
\end{align*}
\end{proof}

\subsection{The case of three or more discontinuity points on $\partial B_r(x_0)$}

An monotonicity estimate slightly different from \eqref{EstimateThreeDiscontinuityPoints} has already been established in David-L\'eger \cite{DavidLeger}. However, in our situation of the ``truncated'' entropy, a much shorter argument is available.
\begin{proof}[Proof of \eqref{EstimateThreeDiscontinuityPoints}]
The estimate is immediate from \eqref{DefinitionD1}, as we have
\begin{align*}
\frac{1}{2}\sum_{x\in J_u \cap \partial B_r(x_0)} \frac{1}{|\nu(x) \cdot \vec{t}_{J_u}(x)|}
\geq \frac{3}{2}.
\end{align*}
\end{proof}

\subsection{The case of two discontinuity points on $\partial B_r(x_0)$}

In the situation of two discontinuity points $J_u$ on the boundary $\partial B_r(x_0)$, David and L\'eger \cite{DavidLeger} only deduced monotonicity for certain angular configurations of the discontinuity points. We have to provide a new idea in the case that the shorter of the two circular arcs enclosed by the discontinuity points has length $\leq r (\frac{1}{2}\pi+0.00001)$.
\begin{proof}[Proof of \eqref{EstimateTwoDiscontinuityPoints}]
In the case that the longer of the two circular arcs enclosed by the discontinuity points $J_u\cap \partial B_r(x_0)$ has length of at most $\omega r \leq r(\frac{3}{2}\pi - 0.00001)$, we proceed as in \cite{DavidLeger} using the representation \eqref{DefinitionD2} and a competitor construction to bound the energy density $E(r,x_0)$: To construct a competitor $v$, we simply connect the two discontinuity points on $\partial B_r(x_0)$ to the center $x_0$ by a straight line and allow our competitor $v$ to have a discontinuity along these lines (see Figure~\ref{FigureTwoPoints}). In other words, we may construct $v$ separately in each of the two sectors enclosed by these lines and $\partial B_r(x_0)$. By Lemma~\ref{CompetitorEstimateSector}, this yields
\begin{align*}
E(r,x_0)
&\leq \frac{1}{r} \int_{B_r(x_0)\setminus J_v} |\nabla v|^2 \,dx + \frac{1}{r} \mathcal{H}^1(J_v \cap B_r(x_0))
\\
&\leq \frac{\frac{3}{2}\pi-0.00001}{\pi}
\int_{\partial B_r(x_0)\setminus J_u} |\partial_\tau u|^2 \,d\mathcal{H}^1 + 2
\\&
\leq \bigg(\frac{3}{2}-c\bigg)
\int_{\partial B_r(x_0)\setminus J_u} |\partial_\tau u|^2 \,d\mathcal{H}^1 + \#\{J_u\cap \partial B_r(x_0)\}.
\end{align*}
Thus, we may conclude using \eqref{DefinitionD2}.

In the remaining case that the longer of the two circular arcs enclosed by the points $J_u\cap \partial B_r(x_0)$ on $\partial B_r(x_0)$ has length $\omega r \geq (\tfrac{3}{2}\pi-0.00001)r$, our new idea is to exploit the consequences of Proposition~\ref{PropositionLowerBoundRadialDirichletEnergyCrack} in the form of Lemma~\ref{LemmaLowerBoundTwoPoints} below.
Note that in case that
\begin{align*}
\sum_{x\in J_u\cap \partial B_r(x_0)} \frac{1}{2|\nu(x)\cdot \vec{t}_{J_u}(x)|}
+\int_{\partial B_r(x_0)} |\partial_\tau u|^2 + |\partial_\nu u|^2 \,d\mathcal{H}^1
\geq \frac{3}{2} +0.005
\end{align*}
holds, by \eqref{DefinitionD1} we clearly have
\begin{align*}
D(r,x_0)\geq \chi_{F(r,x_0)<\frac{3}{2}} (1.505-1.5) \geq c \chi_{F(r,x_0)<\frac{3}{2}}.
\end{align*}
Thus, by Lemma~\ref{LemmaLowerBoundTwoPoints} the only case which we still need to consider is
\begin{subequations}
\begin{align}
\label{TwoPointOmega}
&~~~~~~~~~~~~~~~~~~~~\omega\leq \bigg(\frac{3}{2}+\frac{8}{180}\bigg)\pi
\\&~~~~~~~~~~~~~~~~~~~~~~~~~~~~~
\text{ and }\nonumber
\\&
\label{TwoPointCos}
~~~~~~~~~\sum_{x\in J_u\cap \partial B_r(x_0)} \frac{1}{2|\nu(x)\cdot \vec{t}_{J_u}(x)|}\geq 1.26
\\&~~~~~~~~~~~~~~~~~~~~~~~~~~~~~
\text{ and }\nonumber
\\&
\label{TwoPointCosCos}
\sum_{x\in J_u\cap \partial B_r(x_0)} \bigg(\frac{1}{2|\nu(x)\cdot \vec{t}_{J_u}(x)|}+\frac{1}{2}|\nu(x)\cdot \vec{t}_{J_u}(x)| \bigg) \geq 2+0.055.
\end{align}
\end{subequations}
Furthermore, we may restrict ourselves to the case
\begin{align}
\label{EstimateTangential}
\int_{\partial B_r(x_0)} |\partial_\tau u|^2 + |\partial_\nu u|^2 \,d\mathcal{H}^1
\leq 0.25,
\end{align}
as otherwise we again obtain the desired estimate on $D(r,x_0)$ by \eqref{TwoPointCos} and \eqref{DefinitionD1}. Inserting the estimate \eqref{TwoPointCosCos} into \eqref{DefinitionD2}, we deduce
\begin{align*}
D(r,x_0)
&\geq \chi_{F(r,x_0)<\tfrac{3}{2}} \bigg[2+0.055+\frac{3}{2} \int_{\partial B_r(x_0)} |\partial_\tau u|^2 \,d\mathcal{H}^1 - E(r,x_0)\bigg]
\\&
\stackrel{\eqref{EstimateTangential}}{\geq}
\chi_{F(r,x_0)<\tfrac{3}{2}}
\bigg[
0.005 + 2 + 1.7 \int_{\partial B_r(x_0)} |\partial_\tau u|^2 \,d\mathcal{H}^1 - E(r,x_0)\bigg]
\end{align*}
Due to \eqref{TwoPointOmega} and Lemma~\ref{LemmaCompetitorEstimateSector}, this enables us to conclude.
\end{proof}

\begin{lemma}
\label{LemmaLowerBoundTwoPoints}
Let $\Omega\subset\mathbb{R}^2$ be a planar domain and let $u$ be a minimizer to the Mumford-Shah functional. Let $x_0\in \Omega$.

For a.\,e.\ $0<r<\dist(x_0,\partial\Omega)$ with $\#\{J_u\cap \partial B_r(x_0)\}=2$ and $0<\varphi \leq \frac{1}{2}\pi+0.00001$, we have
\begin{align*}
\sum_{x\in J_u\cap \partial B_r(x_0)} \frac{1}{2|\nu(x)\cdot \vec{t}_{J_u}(x)|}
+\int_{\partial B_r(x_0)} |\partial_\tau u|^2 + |\partial_\nu u|^2 \,d\mathcal{H}^1
\geq \sqrt{2}-0.005,
\end{align*}
where $r\varphi$ denotes the length of the shorter of the two circular arcs enclosed by the points $J_u\cap \partial B_r(x_0)$ on $\partial B_r(x_0)$.

If $\varphi<\frac{1}{2}\pi-\frac{8}{180}\pi$ or
\begin{align*}
\sum_{x\in J_u\cap \partial B_r(x_0)} \frac{1}{2|\nu(x)\cdot \vec{t}_{J_u}(x)|} < 1.26
\end{align*}
or
\begin{align*}
\sum_{x\in J_u\cap \partial B_r(x_0)} \Big(\frac{1}{2|\nu(x)\cdot \vec{t}_{J_u}(x)|} +\frac{|\nu(x)\cdot \vec{t}_{J_u}(x)|}{2} \Big) < 2+0.055
\end{align*}
holds, we even have
\begin{align*}
\sum_{x\in J_u\cap \partial B_r(x_0)} \frac{1}{2|\nu(x)\cdot \vec{t}_{J_u}(x)|}
+\int_{\partial B_r(x_0)} |\partial_\tau u|^2 + |\partial_\nu u|^2 \,d\mathcal{H}^1
\geq \frac{3}{2}+0.005.
\end{align*}
\end{lemma}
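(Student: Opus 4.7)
The plan is to reduce the entire statement to a one-variable trigonometric inequality via Proposition~\ref{PropositionLowerBoundRadialDirichletEnergyCrack} and a convexity-based symmetrization. I would first translate so that $x_0 = 0$ and place the two intersection points symmetrically about the positive $x$-axis so that the shorter arc has length $r\varphi$; let $\theta_j \in (-\pi/2, \pi/2)$ denote the oriented angle of $\vec{t}_j := \vec{t}_{J_u}(x_j)$ from $\nu(x_j)$. Then $1/|\nu(x_j)\cdot \vec{t}_j| = \sec\theta_j$, the angle between $\vec{t}_1$ and $\vec{t}_2$ equals $\varphi + \theta_2 - \theta_1$, and hence $|\vec{t}_1 + \vec{t}_2| = 2|\cos(\varphi/2 - \delta)|$ with $\delta := (\theta_1 - \theta_2)/2$. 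Applying Proposition~\ref{PropositionLowerBoundRadialDirichletEnergyCrack} with $\vec{q}$ parallel to $\vec{t}_1 + \vec{t}_2$ then yields
\[
\int_{\partial B_r(x_0) \setminus J_u} |\partial_\tau u|^2 + |\partial_\nu u|^2 \,d\mathcal{H}^1 \;\geq\; 2|\cos(\varphi/2 - \delta)|,
\]
so that both assertions of the lemma reduce to uniform lower bounds on the quantity
\[
\Phi := \tfrac{1}{2}(\sec\theta_1 + \sec\theta_2) + 2|\cos(\varphi/2 - \delta)|.
\]

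The next step is a symmetrization based on convexity. Writing $\mu := (\theta_1 + \theta_2)/2$, both $\mu \mapsto \sec(\mu+\delta) + \sec(\mu-\delta)$ and $\mu \mapsto [\sec+\cos](\mu+\delta) + [\sec+\cos](\mu-\delta)$ are even and convex on $(-\pi/2, \pi/2)$ (for the second one checks that $(\sec+\cos)''(x) = \sec x(1 + 2\tan^2 x) - \cos x \geq 0$), hence are minimized at $\mu = 0$. This gives the lower bound $\Phi \geq h_\varphi(\delta) := \sec\delta + 2|\cos(\varphi/2 - \delta)|$; in parallel, hypothesis~(b) forces $\sec\delta < 1.26$ and hypothesis~(c) forces $\sec\delta + \cos\delta < 2.055$, each of which in turn yields $|\delta| < 0.655$.

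It would then remain to analyze $h_\varphi$ as a function of $\delta$. It has a cusp at $\delta_\star(\varphi) := \varphi/2 - \pi/2$ where $h_\varphi(\delta_\star) = 1/\sin(\varphi/2)$. On the side $\delta > \delta_\star$ one computes $h_\varphi'(\delta_\star^+) = 2 - \cos(\varphi/2)/\sin^2(\varphi/2)$, which is nonnegative iff $\varphi \geq \varphi_0 := 2\arccos((-1+\sqrt{17})/4) \approx 1.346$, while $h_\varphi'(\delta_\star^-) < 0$ always. Hence for $\varphi \geq \varphi_0$ the cusp is the global minimizer with $\min h_\varphi = 1/\sin(\varphi/2)$; for $\varphi < \varphi_0$ the minimum moves to an interior critical point, but an envelope-theorem argument (using $\partial_\varphi h_\varphi|_\delta = -\sin(\varphi/2 - \delta) < 0$ at such a minimizer) shows that $\varphi \mapsto \min h_\varphi$ is non-increasing on $(0, \pi]$. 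Since $\pi/2 + 10^{-5} > \varphi_0$, the weak inequality of the lemma then follows from $\min h_{\pi/2 + 10^{-5}} = 1/\sin(\pi/4 + 5\cdot 10^{-6}) \geq \sqrt{2} - 0.005$. For the strengthened bound: under hypothesis~(a), $\pi/2 - 8\pi/180 > \varphi_0$ and hence $\min h_\varphi > 1/\sin(\pi/4 - 4\pi/180) > 1.52$; under~(b) or~(c) the induced constraint $|\delta| \leq 0.655$ keeps $\delta$ uniformly bounded away from the true minimizer $\delta_\star \approx -\pi/4$, and a direct computation on the compact region $\{|\delta| \leq 0.655,\ \varphi \leq \pi/2 + 10^{-5}\}$ (where $h_\varphi$ is minimized at the boundary point $\delta = -0.655$) gives $h_\varphi \geq 1.52 > \tfrac{3}{2} + 0.005$.

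The conceptual content of the argument lies entirely in the first two paragraphs: symmetrization reduces a two-dimensional optimization to a one-dimensional one, and Proposition~\ref{PropositionLowerBoundRadialDirichletEnergyCrack} provides the bound on $\int_{\partial B_r} |\nabla u|^2$ needed to make the inequality tight near the extremal configuration. The main obstacle will be the careful tracking of sharp numerical constants in the last step: the numbers $1.26$, $2.055$, $8\pi/180$, and $0.005$ have clearly been calibrated so that the hypotheses~(a)-(c) just barely exclude the unique extremal configuration $(\varphi, \theta_1, -\theta_2) = (\pi/2, -\pi/4, -\pi/4)$ at which $\Phi$ saturates at $\sqrt{2}$, and verifying each of the resulting numerical inequalities requires some care but presents no genuinely new ideas.
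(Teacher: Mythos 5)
Your plan is correct and follows the same core strategy as the paper's proof: apply Proposition~\ref{PropositionLowerBoundRadialDirichletEnergyCrack} with $\vec{q}$ parallel to $\vec{t}_1+\vec{t}_2$, symmetrize by convexity of $\sec$ (and of $\sec+\cos$) over the average variable $\mu=(\theta_1+\theta_2)/2$, and reduce to a one-variable minimization of $h_\varphi(\delta)=\sec\delta+2|\cos(\varphi/2-\delta)|$ followed by calibrated numerics. The differences are presentational. The paper first truncates $\varphi$ to $\tilde\varphi:=\min\{\varphi,\pi/2\}$ and pays a Lipschitz penalty of $\sqrt{0.00002}$ (this is where the $-0.005$ slack comes from), then does a preliminary WLOG sign reduction $\alpha_1\geq 0,\ \alpha_2\leq 0$ and uses the positive part $(\cos(\tilde\varphi/2+\alpha))_+$ so that the reduced lower bound is \emph{pointwise} nonincreasing in $\tilde\varphi$; you keep $\varphi$ unchanged, use $|\cos|$, and run an envelope-theorem argument to show $\varphi\mapsto\min_\delta h_\varphi$ is nonincreasing. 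Both handle the cusp at $\delta_\star=\varphi/2-\pi/2$, which is the paper's minimizer $\alpha=\pi/4$ (resp.\ $\frac{98}{90}\cdot\frac{\pi}{4}$) under the change of variable $\alpha=-\delta$. For the strengthened bound under hypotheses (b), (c), the paper reasons by contrapositive ($f<1.51\Rightarrow\alpha>0.42\cdot\pi/2\Rightarrow$ the two sums exceed $1.26$ and $2.055$), whereas you derive the constraint $|\delta|\lesssim 0.655$ directly from (b), (c) via the convexity bound and then show $h_\varphi\geq 1.52$ on the resulting compact $\delta$-range; these are the same computation organized differently.

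The two places where your plan still needs a bit of care in the write-up, both of which the paper also handles by ``direct computation'': (i) the claim that the cusp is the \emph{global} (not just local) minimizer for $\varphi\geq\varphi_0$ requires checking that $h_\varphi$ is monotone on each side of $\delta_\star$, not merely one-sided derivative signs at the cusp; (ii) the claim that on the compact region $|\delta|\leq 0.655$ the minimum of $h_\varphi$ sits at the endpoint $\delta=-0.655$ for \emph{all} $\varphi\leq\pi/2+10^{-5}$ (not just for $\varphi\geq\varphi_0$) needs a uniform-in-$\varphi$ argument — e.g.\ observing that $\delta_\star<-0.655$ always, so the whole interval sits on the ``smooth'' side of the cusp where $h_\varphi'(\delta)=\sec\delta\tan\delta+2\sin(\varphi/2-\delta)$ can be checked to be positive on $[-0.655,\varphi/2]$. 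Neither is a genuine gap, just bookkeeping to be filled in.
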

\begin{proof}
We denote the two points in $J_u\cap \partial B_r(x_0)$ by $x_1$ and $x_2$; without loss of generality, we may assume that $x_1$ lies counterclockwise from $x_2$. We then define $\alpha_i\in (-\frac{\pi}{2},\frac{\pi}{2})$ by requiring $\vec{t}_{J_u}(x_i):=\cos(\alpha_i) \nu(x_i)+\sin(\alpha_i) \nu^\perp(x_i)$, where the sign convention of $^\perp$ is such that $e_1^\perp=e_2$.
Observe that the angle between the vectors $\vec{t}_{J_u}(x_1)$ and $\vec{t}_{J_u}(x_2)$ is given by $\varphi+\alpha_1-\alpha_2$; thus, the length of the vector $\vec{t}_{J_u}(x_1)+\vec{t}_{J_u}(x_2)$ is given as $\sqrt{2+2\cos(\varphi+\alpha_1-\alpha_2)}$.
Using Proposition~\ref{PropositionLowerBoundRadialDirichletEnergyCrack} and optimizing in $\vec{q}$, we deduce
\begin{align}
\nonumber
\int_{\partial B_r(x_0)} |\partial_\tau u|^2 + |\partial_\nu u|^2 \,d\mathcal{H}^1
&\geq
\sqrt{2+2\cos(\varphi+\alpha_1-\alpha_2)}
\\&
\label{LowerBoundGradTwoPoints}
\geq
\sqrt{2+2\cos(\tilde \varphi+\alpha_1-\alpha_2)}-\sqrt{0.00002}
\end{align}
with $\tilde \varphi := \min\{\varphi,\tfrac{1}{2}\pi\}$. Here, in the last step we have used elementary bounds, the Lipschitz continuity of $\cos$ with Lipschitz constant $1$, as well as the assumption $\varphi\leq \tfrac{\pi}{2}+0.00001$.

We then consider the minimization problem for
\begin{align}
\label{Definitionf}
f(\alpha_1,\alpha_2)
:=\frac{1}{2\cos \alpha_1} + \frac{1}{2\cos \alpha_2} + \sqrt{2+2\cos(\tilde \varphi+\alpha_1-\alpha_2)}
\end{align}
for $-\pi/2<\alpha_1,\alpha_2<\pi/2$. Using the fact that $0<\tilde \varphi \leq \pi/2$, the monotonicity of $\cos(s)$ for $0<s<\pi$, and the symmetries of $\cos(s)$, by considering all combinations of signs $\pm \alpha_i$ we observe that the infimum is not changed upon restricting to $\alpha_1\geq 0$ and $\alpha_2\leq 0$. Set $\alpha:=(\alpha_1-\alpha_2)/2$ and $s:=(\alpha_1+\alpha_2)/2$; we then have $0<\alpha<\pi/2$ and $-\alpha<s<\alpha$ and the function may be rewritten as
\begin{align*}
f(\alpha_1,\alpha_2)
=\frac{1}{2\cos(\alpha+s)}
+\frac{1}{2\cos(\alpha-s)}
+\sqrt{2+2\cos(\tilde \varphi+2\alpha)}.
\end{align*}
As for fixed $\alpha$ the function on the right-hand side is convex in $s$ (for $-\alpha<s<\alpha$) and symmetric around $s=0$, the minimum in $s$ is obtained for $s=0$. This shows
\begin{align*}
f(\alpha_1,\alpha_2)
&\geq
\frac{1}{2\cos \alpha}
+\frac{1}{2\cos \alpha}
+\sqrt{2+2\cos(\tilde \varphi+2\alpha)}
\\&
=\frac{1}{\cos \alpha} + 2\Big|\cos\Big(\frac{\tilde \varphi}{2}+\alpha\Big)\Big|
\\&
\geq 
\frac{1}{\cos \alpha} + 2\Big(\cos\Big(\frac{\tilde \varphi}{2}+\alpha\Big)\Big)_+
\end{align*}
Since $0<\tilde \varphi \leq \pi/2$ and $0<\alpha<\pi/2$, we see that the second term is nonincreasing in $\tilde \varphi$. We thus get
\begin{align*}
f(\alpha_1,\alpha_2)
\geq 
\frac{1}{\cos \alpha} + 2\Big(\cos\Big(\frac{82}{90} \cdot \frac{\pi}{4}+\alpha\Big)\Big)_+
\end{align*}
for $0<\tilde \varphi< \frac{82}{90} \cdot \frac{\pi}{2}$ and
\begin{align}
\label{LowerBoundf}
f(\alpha_1,\alpha_2)
\geq 
\frac{1}{\cos \alpha} + 2\Big(\cos\Big(\frac{\pi}{4}+\alpha\Big)\Big)_+
\end{align}
for all $0<\tilde \varphi\leq \frac{\pi}{2}$.
The minimum of these right-hand sides is readily seen to be attained at $\alpha=\frac{98}{90}\cdot \tfrac{\pi}{4}$ respectively at $\alpha=\tfrac{\pi}{4}$ (for smaller values of $\alpha$ the derivative of the right-hand side is $\sin\alpha/\cos^2\alpha-2\sin(82/90\cdot \pi/4+\alpha)<0$ respectively $\sin\alpha/\cos^2\alpha-2\sin(\pi/4+\alpha)<0$; for larger values of $\alpha$, the derivative is positive). We therefore get
\begin{align*}
f(\alpha_1,\alpha_2)\geq \frac{1}{\cos \big(\frac{98}{90}\cdot\tfrac{\pi}{4}\big)}\geq 1.52
\end{align*}
for $0<\tilde \varphi<\frac{82}{90}\cdot \tfrac{\pi}{2}$ and
\begin{align*}
f(\alpha_1,\alpha_2)\geq \sqrt{2}
\end{align*}
in the general case $0<\tilde \varphi\leq \tfrac{\pi}{2}$. Furthermore, by \eqref{LowerBoundf} and the considerations just below \eqref{LowerBoundf} we see that $f(\alpha_1,\alpha_2)<1.51$ entails $(\alpha_1-\alpha_2)/2>0.42 \cdot \pi/2$. This in turn implies $\frac{1}{2\cos \alpha_1}+ \frac{1}{2\cos \alpha_2} \geq 1.26$ as well as $\sum_{i=1}^2\big(\frac{1}{2\cos \alpha_i}+\frac{1}{2}\cos \alpha_i\big)\geq 2.055$ due to convexity and monotonicity of the functions $\tilde g(\tilde \alpha)=\frac{1}{\cos \tilde \alpha}$ and $g(\tilde \alpha):=\frac{1}{\cos \tilde \alpha}+\cos \tilde \alpha$.

Our lemma therefore follows from \eqref{LowerBoundGradTwoPoints}, \eqref{Definitionf}, and the lower bounds on $f$ derived above.
\end{proof}

\begin{figure}
\begin{tikzpicture}[scale=0.9]
\draw (0,0) circle (2.0);
\draw[color=Blue,very thick] (0,0) -- (1.2,-1.6);
\draw[color=Blue,very thick] (0,0) -- (1.2,1.6);
\draw[fill=black] (1.2,1.6) circle (1.2pt);
\draw[fill=black] (1.2,-1.6) circle (1.2pt);
\end{tikzpicture}
\caption{
\label{FigureTwoPoints}
Construction of a competitor for two discontinuity points on the boundary: If the larger arc enclosed by the two points in $\{J_u \cap \partial B_r\}$ has length $\leq (\frac{3}{2}+0.00001)\pi r$, we allow our competitor $v$ to have a discontinuity along the lines connecting the two points in $\{J_u \cap \partial B_r\}$ to the center of the disk.}
\end{figure}
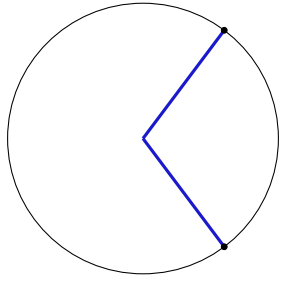

\subsection{Construction of competitors on circular sectors}

The following estimate on harmonic functions in circular sectors is well-known and has been used e.\,g.\ by David and L\'eger \cite{DavidLeger} to construct competitors for minimizers of the Mumford-Shah functional.
\begin{lemma}
\label{LemmaCompetitorEstimateSector}
Let $r>0$. Consider a circular sector $S\subset B_r(x_0)$ with opening angle $0<\theta<2\pi$. Let $u\in H^1(\partial B_r(x_0)\cap \overline S)$ be any function defined on the outer boundary of the sector. Then there exists an extension $v\in H^1(S)$ with $v|_{\partial B_r(x_0)\cap \overline S} = u|_{\partial B_r(x_0)\cap \overline S}$ and
\begin{align}
\label{CompetitorEstimateSector}
\frac{1}{r} \int_S |\nabla v|^2 \,dx
\leq \frac{\theta}{\pi} \int_{\partial B_r(x_0)\cap \overline S} |\partial_\tau u|^2 \,d\mathcal{H}^1.
\end{align}
\end{lemma}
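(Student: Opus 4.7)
The plan is to mimic the Fourier-series competitor construction already used in the proof of Lemma~\ref{LemmaCompetitorZeroIntersection}, but adapted to a sector of opening angle $\theta$ rather than a full disk. Without loss of generality I may assume $x_0=0$ and that $S=\{(\rho\cos\varphi,\rho\sin\varphi):0\le\rho\le r,\ 0\le\varphi\le\theta\}$. I will build $v$ as a separated-variable series whose angular part is compatible with Neumann conditions on the two straight sides of the sector (so no admissibility constraint arises there).

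Concretely, I decompose the trace on the outer arc in a cosine series
\[
u(r,\varphi)=\sum_{k\ge 0} a_k\cos\!\Big(\frac{k\pi\varphi}{\theta}\Big),\qquad \varphi\in[0,\theta],
\]
(with $(a_k)_k\in\ell_2$; this is the orthonormal basis of $L^2(0,\theta)$ associated to Neumann boundary data at $\varphi=0,\theta$), and define the harmonic extension
\[
v(\rho,\varphi):=\sum_{k\ge 0}a_k\Big(\frac{\rho}{r}\Big)^{k\pi/\theta}\cos\!\Big(\frac{k\pi\varphi}{\theta}\Big).
\]
By construction $v\in H^1(S)$ with $v|_{\partial B_r(x_0)\cap\overline S}=u|_{\partial B_r(x_0)\cap\overline S}$.

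The remaining step is a direct computation using orthogonality of $\{\cos(k\pi\varphi/\theta)\}_{k\ge 0}$ and $\{\sin(k\pi\varphi/\theta)\}_{k\ge 1}$ on $[0,\theta]$. Writing $|\nabla v|^2=|\partial_\rho v|^2+\rho^{-2}|\partial_\varphi v|^2$ in polar coordinates, a mode-by-mode calculation yields
\[
\frac{1}{r}\int_S|\nabla v|^2\,dx=\sum_{k\ge 1}\frac{k\pi}{2r}\,|a_k|^2,\qquad \int_{\partial B_r(x_0)\cap\overline S}|\partial_\tau u|^2\,d\mathcal{H}^1=\sum_{k\ge 1}\frac{k^2\pi^2}{2r\theta}\,|a_k|^2.
\]
Comparing the two sums, the desired estimate \eqref{CompetitorEstimateSector} is equivalent to $k\pi/(2r)\le(\theta/\pi)\cdot k^2\pi^2/(2r\theta)=k^2\pi/(2r)$ for every $k\ge 1$, which is just $k\le k^2$.

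I do not anticipate a real obstacle here: the only thing one has to be careful about is that the angular basis chosen accommodates arbitrary trace data on the arc without imposing conditions on the radial sides, and that the radial profile $(\rho/r)^{k\pi/\theta}$ matches the Laplace eigenvalue of $\cos(k\pi\varphi/\theta)$ in polar coordinates so that mixed terms in $|\nabla v|^2$ vanish after integration in $\varphi$. Both are guaranteed by the cosine choice. Everything else is a bookkeeping exercise.
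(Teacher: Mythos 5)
Your proof is correct and follows essentially the same route as the paper: the same cosine decomposition on the arc $\{(\rho,\varphi):\rho=r,\ 0\le\varphi\le\theta\}$, the same harmonic extension $v(\rho,\varphi)=\sum_k a_k(\rho/r)^{k\pi/\theta}\cos(k\pi\varphi/\theta)$, and the same mode-by-mode comparison reducing the inequality to $k\le k^2$. The computed values of $\frac{1}{r}\int_S|\nabla v|^2$ and $\int_{\partial B_r\cap\overline S}|\partial_\tau u|^2\,d\mathcal{H}^1$ match the paper exactly.
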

\begin{proof}
Assume without loss of generality that the circular sector $S$ is given in radial coordinates as $\{(\rho,\varphi):0<\rho<r,0<\varphi<\theta\}$.
We expand $u|_{\partial B_r(x_0)\cap \overline S}$ in terms of its Fourier modes as
\begin{align*}
u|_{\partial B_r(x_0)\cap \overline S}(\rho,\varphi)
=\sum_{k=0}^\infty a_k \cos\bigg(\frac{k\pi}{\theta}\varphi\bigg).
\end{align*}
This enables us to define
\begin{align*}
v(\rho,\varphi) := \sum_{k=0}^\infty a_k \bigg(\frac{\rho}{r}\bigg)^{k\pi/\theta} \cos\bigg(\frac{k\pi}{\theta}\varphi\bigg).
\end{align*}
We compute
\begin{align*}
\int_{\partial B_r(x_0)\cap \overline S} |\partial_\tau u|^2 \,d\mathcal{H}^1
=r^{-1} \frac{\pi^2}{2\theta} \sum_{k=1}^\infty k^2 |a_k|^2
\end{align*}
and
\begin{align*}
\int_{S} |\nabla v|^2 \,dx
=\int_0^r \rho \theta \times \frac{\rho^{2k\pi/\theta-2}}{r^{2k\pi/\theta}} \sum_{k=1}^\infty \frac{k^2 \pi^2}{\theta^2} |a_k|^2 \,d\rho
=\frac{\pi}{2} \sum_{k=1}^\infty k |a_k|^2,
\end{align*}
which proves \eqref{CompetitorEstimateSector}.
\end{proof}

\section*{Acknowledgements}
\nopagebreak
\begin{center}
\includegraphics[scale=0.1]{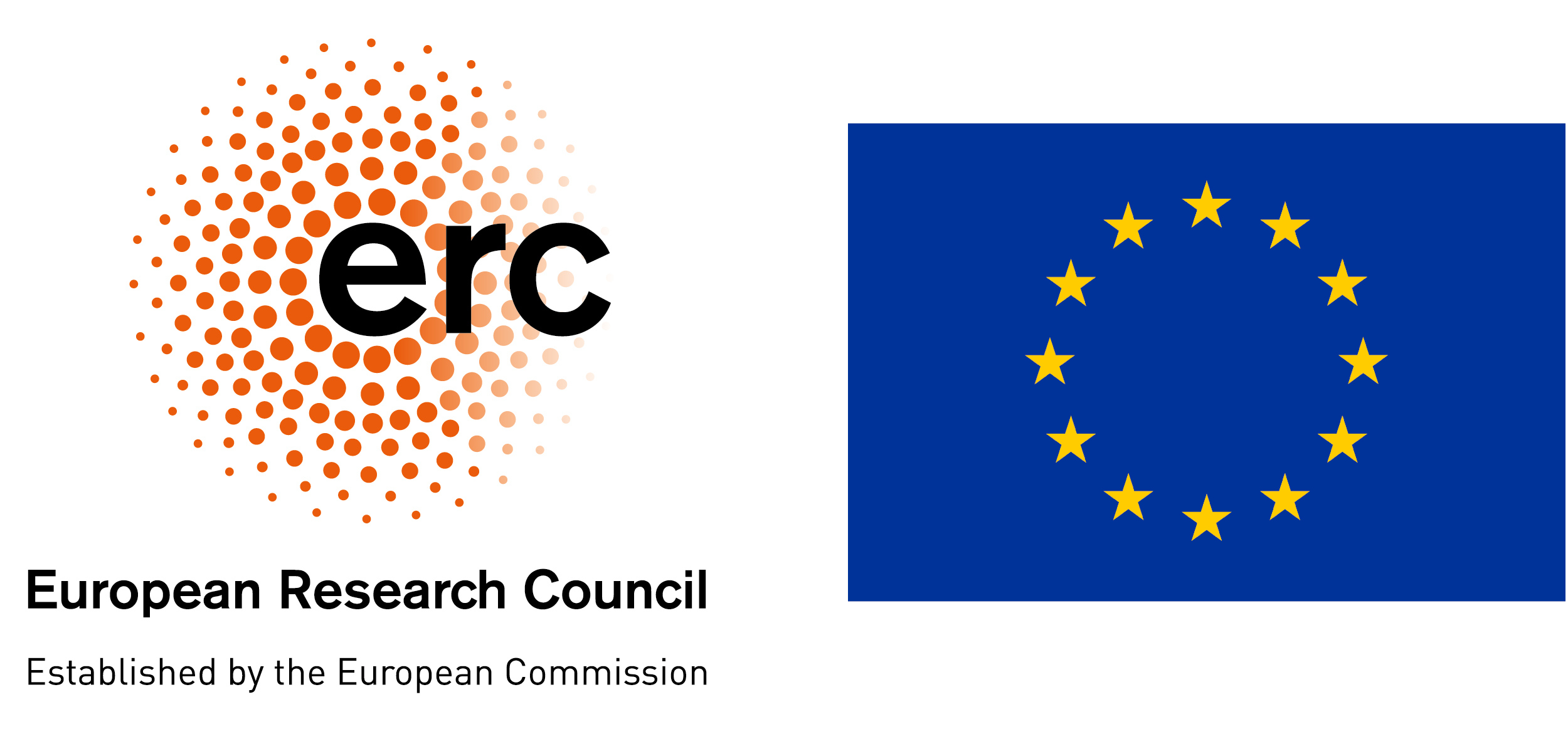} 
\end{center}
This project has received funding from the European Research Council (ERC) under the European Union's Horizon 2020 research and innovation programme (grant agreement No 948819).

\bibliographystyle{abbrv}
\bibliography{mumford_shah}

\end{document}